\title{Qualitative Properties of Local Random Invariant
 Manifolds for SPDEs with Quadratic Nonlinearity }
\author{Dirk Bl\"omker\\
Institut f\"ur Mathematik\\
Universit\"at Augsburg\\
86135 Augsburg\\
dirk.bloemker@math.uni-augsburg.de
 \bigskip\\
 Wei Wang\\
School of Mathematics\\
 University of Adelaide\\
 5005 Adelaide Australia\\
 w.wang@adelaide.eu.au\\ \& \\
Department of Mathematics\\
Nanjing University\\
210093 Nanjing China\\
wangweinju@yahoo.com.cn}
\date{\today}
\newtheorem{theorem}{Theorem}
\newtheorem{lemma}{Lemma}
\newtheorem{definition}{Definition}
\newtheorem{corollary}{Corollary}
\newtheorem{remark}{Remark}
\newcommand{\eps}{\varepsilon}
\begin{document}
\maketitle

\begin{abstract}
The qualitative properties of local random invariant manifolds for
stochastic partial differential equations with quadratic
nonlinearities and multiplicative noise is studied by a cut off
technique. By a detail estimates on the Perron fixed point equation
describing the local random invariant manifold,
the structure near a bifurcation is given.
\end{abstract}

\section{Introduction}

Stochastic partial differential equations SPDEs arise as macroscopic
mathematical models of complex systems under random influences.
There have been rapid progresses in this area \cite{Gar, Roz, PZ92,
WaymireDuan, Bl:book, HuangYan}. More recently, SPDEs have been
investigated in the context of random dynamical systems (RDS)
\cite{Arn98}; see \cite[e.g.]{CLR00,CDSch03,Cheu95,CF94, CFD97,
Sch92, DLSch03, DLSch04}, among others.

Invariant manifolds are special invariant sets locally represented
by graphs in state spaces (function spaces) where the solution
process of the system is defined. A random invariant manifold
provides a geometric structure to reduce stochastic dynamics. In
fact the dynamics of the system is completely determined by that on
the globally attracting invariant manifold.
Nevertheless, due to the inherit non-autonomous nature of the SPDE,
these manifolds move in time. So one can reduce the
system to a lower dimensional system on the invariant manifold
together with the dynamics of that manifold.

There are some
results on the existence of random invariant manifold for a class of
stochastic partial differential equations with Lipschitz
nonlinearity, \cite{DLSch03, DLSch04, LSch07} etc.\
In \cite{CLR00} an estimate of dimension of the local invariant
manifold  is given, in order
to study a pitchfork type bifurcation for stochastic
reaction-diffusion equations with cubic nonlinearity.

In the recent work \cite{WD07}, by a detail study of the properties
of flow, an invariant manifold reduction of a class stochastic
partial differential equations with Lipschitz nonlinearity is
obtained. The reduced equation is a stochastic differential equation
defined on a finite dimensional invariant manifold.
Moreover in \cite{Duan:Preprint} the expansion in leading order terms
of a random invariant manifold for noise-strength to $0$ is considered.

There is a difficulty in reducing  infinite dimensional stochastic
systems with non-Lipschitz nonlinearity to a lower dimensional
system on an invariant manifold. But it can be done near an
equilibrium solution by introducing a cut-off function near the
equilibrium solution such that the nonlinearity becomes globally
Lipschitz in an approximate space. Similar ideas are already
used in \cite{WD07} and \cite{CLR00}.
In this situation for the equation with cut-off
the global approach in \cite{WD07}
could be followed,
and the result for the original equation is then a local one.

However,  let us point out that the
invariant manifold is still moving  in the infinite dimensional
space.  It is in general difficult  to see clearly  the dynamics of
the reduced system even though it is essentially finite dimensional.
Therefore we aim at the local shape of the invariant manifold.

At the same time an  amplitude equation is an important tool to
describe qualitatively the dynamics of stochastic systems near a
change stability, which has been studied heavily for additive noise
(\cite[e.g.]{BL03,BH04,BHP07}) and for multiplicative noise (see
\cite{Bl:book, BHP07b}). In contrast to random invariant manifolds,
which move in time, the dynamics studied via amplitude equations
approximates the dynamics on a given fixed vector space, and the
essential dynamics is given by a stochastic ordinary differential
equation on dominant modes. The drawback in this case is that the
results
 hold only with high probability.
Results that hold almost surely are not possible to obtain.

 In this paper we  study the properties of the random invariant manifold locally.
In fact we consider properties of local invariant manifolds for
equations  near a change of stability with quadratic nonlinearities
and multiplicative noise. One of the simplest examples is a
Burgers-type equation
\begin{equation}
\label{e:burger}
\partial_t u =\partial_x^2 u+u  +\nu u + \tfrac12 \partial_x u^2 + \sigma u \circ \dot{W}
\end{equation}
subject to Dirichlet boundary conditions on $[0,\pi]$ with
one-dimensional multiplicative noise of Stratonovic-type.

There are many examples of SPDEs near its first
change of stability, where our results apply.
For instance, we can consider
the Kuramoto-Sivashinsky equation, a model from surface
growth (cf. \cite{Bl:surf}),
Navier-Stokes equations with an additional linear term,
or the Rayleigh-Benard system near the convective instability.

For our main results we reduce the cut-off equation of (\ref{e:burger}) to a
stochastic differential equation on an attracting random invariant manifold in a
small cut-off ball. Our results extend \cite{CLR01}, where the local
dimension of a random invariant manifold near a fixed point  is
studied. Using the linearization in our case the local dimension
is one, but here we show that with high probability
the manifold is locally the graph
of a quadratic function over a one-dimensional
space (cf. Theorem \ref{thm:shape}).

Let us compare our results with amplitude equations.
Typically, one assumes there the scaling
$\sigma=\varepsilon$, $\nu=\nu_0\varepsilon^2$ for
some small $\varepsilon$  with $|\nu_0|\leq 1$.
In that scaling our Burgers-type equation reads
\begin{equation}\label{e:re-burger}
\partial_t u =\partial_x^2 u+u  +\nu_0\varepsilon^2 u + \tfrac12 \partial_x u^2 + \varepsilon u \circ
\dot{W}\,.
\end{equation}
In \cite{Bl:book} it was shown that for solutions of size ${\cal
O}(\varepsilon)$ with  probability  higher than $1-C_p\varepsilon^p$ for
all $p>1$
$$
u(t,x)\approx \varepsilon a(\varepsilon^2t)\sin
$$
on intervals of length ${\cal O}(\varepsilon^{-2})$, where
$$
\partial_T a = \nu_0 a - \tfrac1{12}a^3 + a\circ\dot{W}
$$
Our main results of this paper show for $\nu$ and $\sigma$ small,
but without scaling assumption, that there is a time dependent
random Lipshitz map $\phi(t,\omega,\cdot ):\mathbb{R}\to\mathbb{R}$
describing locally the random invariant manifold
such that near $0$ the flow along the manifold is
\begin{equation}
\partial_t a=\nu a+ a\phi(t,a)+\sigma a\circ \dot{W}.
\end{equation}
Moreover, for a given $t$ the probability that
$$\phi(t,a)\approx -\tfrac1{12}a^2
$$
is larger than  $1-C\exp\{-1/\eps\}$.

The rest of the paper is organized  as follows. In Section
\ref{sec:result}, assumptions and main results are presented.
Section \ref{sec:exLRIM} states the results on existence of
random invariant manifolds using a cut-off technique.
While Section \ref{sec:shapeIM} provides results on the
local structure of the manifold.


\section{Setting and Results}\label{sec:result}


Consider the following abstract equation
\begin{equation}
\label{e:main}
\partial_t u = -Lu +\nu u +B(u,u) + \sigma u \circ \dot{W},\;\;
u(0)=u_0\in H
\end{equation}
where $W$ is a standard real valued Brownian motion, and the noise
is in the Stratonovic sense. The real constants $\nu$ and $\sigma$
describe the distance from the bifurcation and the noise strength,
respectively. Assume $H$ is a real separable Hilbert space with norm
$\|\cdot\|$ and scalar product $\langle\cdot,\cdot\rangle$.

 We make the following assumptions.
\begin{flushleft}
{\bf Assumption A$_1$} {\it Let L be a non-negative self-adjoint
unbounded operator with compact resolvent  in the Hilbert space H,
and suppose that $\{e_k\}_{k=1}^\infty$ and
$\{\lambda_k\}_{k=1}^\infty$ is an orthonormal basis of
eigenfunctions and the corresponding ordered eigenvalues
($\lambda_{k+1} \ge \lambda_k$).}
\end{flushleft}

\begin{flushleft}
{\bf Assumption A$_2$} {\it The kernel of $L$, $H_c:={\rm
span}\{e_1,\ldots,e_N\}$, is finite dimensional, i.e. $\lambda_1=\ldots=\lambda_N=0$ and
$\lambda_*=\lambda_{N+1}>0$. }
\end{flushleft}
Denote by $P_c$ the orthogonal projection from $H$ to $H_c$.
Furthermore, $P_s=I-P_c$ and let $L_s=P_s L$\,. In the following
we use the subscript 'c' always for projection onto $H_c$ and
 's'  for projection onto $H_s$.
\begin{remark}
Note that the assumption that the  eigenfunctions form an
orthonormal set is for simplicity. We could relax this, assuming
that $P_c$ is a continuous projection commuting with $L$,
which is no longer self-adjoint in that case.
\end{remark}
Let us now  introduce the interpolation spaces $H^\alpha$,
$\alpha>0$, as the domain of $L^{\alpha/2}$ endowed with scalar
product $\langle u, v \rangle_\alpha=\langle u, (1+L)^\alpha
v\rangle$ and corresponding norm $\|\cdot\|_\alpha$. Furthermore, we
identify $H^{-\alpha}$ as the dual of $H^\alpha$ with respect to  the
inner product in $H$.

We make the following assumption on the nonlinearity.

\begin{flushleft}
{\bf Assumption A$_3$}

{\it For some $\alpha\in (0, 1)$, let $B : H\times H \rightarrow
H^{-\alpha}$ be a bounded bilinear and symmetric  operator, i.e.
$B(u,\bar{u})=B(\bar{u},u)$ and  there is a constant~$C_B>0$ such
that $\|B(u,\bar{u}) \|_{-\alpha} \le C_B  \|u \| \|\bar{u} \|$.
Also denote by $\langle\cdot,\cdot\rangle$ the dual paring between
$H^{-\alpha}$ and $H^\alpha$. Then suppose that for $B(u)\triangleq
B(u,u)$
$$\langle B(u),u \rangle = 0
\quad \text{ for all }u \in H^\alpha\,.
$$}
\end{flushleft}
In the following we denote $P_cB(\cdot, \cdot) $ by $B_c(\cdot,
\cdot)$ and $P_sB(\cdot, \cdot) $ by $B_s(\cdot, \cdot)$\,. From
bilinearity and boundedness, we immediately obtain a local Lipschitz
condition for $B$. To be more precise,  for all $R>0$
\begin{equation}
\|B(u)-B(\bar{u}) \|_{-\alpha}
=\|B(u - \bar{u}, u + \bar{u}) \|_{-\alpha}
\leq 2RC_B \|u-\bar{u}\|
\end{equation}
for all $u$, $\bar{u}\in H$ with $\|u\|\leq R$ and $\|\bar{u}\|\leq
R$\,.

Let us remark that the symmetry of $B$ is not necessary,
but it simplifies for instance expansions of $B(u+v)$.
Moreover, one can without loss of generality always assume
that the quadratic form is given by a symmetric $B$.
If not one can consider $\tilde{B}(u,v)=\tfrac12(B(u,v)+B(v,u))$.


\subsection*{Random Dynamical Systems}

Before giving our main result, we recall some basic theory of random
dynamical systems. We will work on the canonical probability space
$(\Omega_0, \mathcal{F}_0, \mathbb{P})$ where the sample space
$\Omega_0$ consists of the sample paths of $W(t)$. To be more precise
$W$ is the identity on $\Omega_0$, with
$$
\Omega_0=\{w\in C([0, \infty), \mathbb{R}): w(0)=0 \}\,,
$$
and $\mathbb{P}$ the Wiener measure.
For more details see \cite{Arn98}.

 Let $\theta_t: (\Omega_0, \mathcal{F}_0,
\mathbb{P})\rightarrow (\Omega_0, \mathcal{F}_0, \mathbb{P}) $ be a
metric dynamical system (driven system), that is,
\begin{itemize}
    \item $\theta_0=id$,
    \item $\theta_t\theta_s=\theta_{t+s}$,\;\; for all $s$,
        $t\in\mathbb{R}$,
    \item the map $(t,\omega)\mapsto \theta_t\omega$ is
     measurable and $\theta_t\mathbb{P}=\mathbb{P}$ for all
        $t\in\mathbb{R}$.
\end{itemize}
On $\Omega_0$ the map  $\theta_t$ is  the shift
\begin{equation}\label{MD}
\theta_t\omega(\cdot)=\omega(\cdot+t)-\omega(t)\,,\;\; t\in
\mathbb{R}\,, \omega\in\Omega_0\,.
\end{equation}
\begin{definition}
\label{def:RDS}
Let $(\mathcal{X}, d)$ be a metric space with Borel $\sigma$-algebra
$\mathcal{B}$, then a random dynamical system on $(\mathcal{X}, d)$ over
$\theta_t$ on {\rm(}$\Omega_0, \mathcal{F}_0, \mathbb{P}${\rm )} is
a measurable map
\begin{eqnarray*}
\varphi:\mathbb{R}^+\times \Omega_0\times \mathcal{X}&\rightarrow&
\mathcal{X}\\
(t, \omega, x) &\mapsto& \varphi(t, \omega)x
 \end{eqnarray*}
  having the following cocycle property
  $$
\varphi(0, \omega) x=x,
\quad
\varphi(t, \theta_\tau\omega)\circ\varphi(\tau, \omega) x
=\varphi(t+\tau,\omega)x
 $$
 for $t, \tau\in\mathbb{R}^+$, $x\in \mathcal{X}$ and $\omega\in\Omega$.
\end{definition}
A RDS $\varphi$ is continuous or differentiable if $\varphi(t,
\omega) : \mathcal{X}\rightarrow \mathcal{X}$ is continuous or
differentiable (see \cite{Arn98} for more details on RDS).

It is well known that in order to show that (\ref{e:main}) generates
an RDS, one can rely on a random transformation to a random PDE. For
this we introduce the following real-valued stationary  process.
\begin{definition}
Define
\begin{equation}
z(t)= z(\theta_t\omega)
\end{equation}
on $\Omega_0$\,, where
\begin{equation}
\label{e:defz}
 z(\omega)=\sigma\int_{-\infty}^0 e^s \omega(s)ds\,.
\end{equation}
\end{definition}
Now the mapping $t\mapsto  z(\theta_t\omega)$ is continuous and solves
\begin{equation}\label{e:ou}
dz+zdt=\sigma d\omega
 \quad \text{or}  \quad
z(\theta_t\omega)=z(\omega) - \int_0^t z(\theta_s\omega)ds+\sigma \omega(t).
\end{equation}
Moreover,
\begin{equation*}
\lim_{t\rightarrow\pm\infty}\frac{|z(\theta_t\omega)|}{|t|}=0\;\;{\rm
and }\;\; \lim_{t\rightarrow\pm\infty}\frac{1}{t}\int_0^t
z(\theta_\tau\omega)d\tau=0 \text{\  for a.e.\ }\omega\in\Omega_0\,.
\end{equation*}
The above properties hold in a $\theta_t$ invariant set
$\Omega\subset\Omega_0$ of full probability, see \cite{DLSch03}.
In the following,
we frequently use the shorthand notation $z(t)=z(\theta_t\omega)$.

By the transformation
\begin{equation}\label{e:trans}
v(t)=e^{-z(t)}u(t)
\end{equation}
Equation (\ref{e:main}) becomes
\begin{equation}\label{e:v-main}
\partial_t v=-Lv+zv+\nu v+e^{z}B(v,v),
\end{equation}
which is an evolutionary equation with random stationary
coefficient. Then for almost all $\omega\in\Omega$\,, by the same
discussion for the wellposedness to deterministic evolutionary
equation \cite{Tem97}, for any $t_0<T\in\mathbb{R}$ and $v_0\in H$
there is a unique solution $v(t,\omega; t_0,v_0)\in C(t_0,T;H)$ of
equation (\ref{e:v-main}) with $v(t_0)=v_0$ and the map $v_0\mapsto
v(t,\omega; t_0, v_0)$ is continuous for all $t\geq t_0$\,. Then by
the stationary transformation (\ref{e:trans}) we have the following
result.
\begin{theorem}
\label{thm:exRDS} Under Assumptions $A_1-A_3$,  (\ref{e:main})
generates a continuous RDS $\varphi(t, \omega)$ on $H$.
\end{theorem}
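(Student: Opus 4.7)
The plan is to conjugate (\ref{e:main}) to the random PDE (\ref{e:v-main}) via the transformation (\ref{e:trans}), solve (\ref{e:v-main}) path-by-path by deterministic evolution-equation theory, and then read off the cocycle property for the original equation from the cocycle property of the shift $\theta_t$ together with uniqueness of the transformed equation.

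First, for each fixed $\omega\in\Omega$ the coefficient $t\mapsto z(\theta_t\omega)$ is continuous, so (\ref{e:v-main}) is a deterministic evolution equation with continuous, time-dependent, and locally bounded coefficient in front of $v$. Assumption~A$_3$ gives that $B$ is bounded bilinear into $H^{-\alpha}$ with $\alpha<1$ and satisfies the energy cancellation $\langle B(v),v\rangle=0$; this is exactly what is needed to obtain a priori bounds for $\|v(t)\|$ by a standard Galerkin scheme (as cited via \cite{Tem97}), yielding for every $v_0\in H$ and every $T>t_0$ a unique solution $v(\cdot,\omega;t_0,v_0)\in C([t_0,T];H)$ that depends continuously on $v_0$. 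Since $z(\theta_{t+\tau}\omega)=z(\theta_t(\theta_\tau\omega))$, uniqueness forces the flow identity
$$
v(t+\tau,\omega;0,v_0)=v\bigl(t,\theta_\tau\omega;0,v(\tau,\omega;0,v_0)\bigr).
$$
I then set
$$
\varphi(t,\omega)u_0 := e^{z(\theta_t\omega)}\, v\bigl(t,\omega;0,e^{-z(\omega)}u_0\bigr),
$$
so that $\varphi(0,\omega)u_0=u_0$ is automatic. Continuity of $u_0\mapsto\varphi(t,\omega)u_0$ inherits from the continuous dependence of $v$ on the initial datum, and joint measurability of $(t,\omega,u_0)\mapsto\varphi(t,\omega)u_0$ follows because $v$ depends on $\omega$ only through the continuous (hence measurable) path $s\mapsto z(\theta_s\omega)$. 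For the cocycle identity I combine the flow identity above with the definition of $\varphi$: the right-hand side of $\varphi(t,\theta_\tau\omega)\varphi(\tau,\omega)u_0$ contains a factor $e^{-z(\theta_\tau\omega)}e^{z(\theta_\tau\omega)}=1$, and after this cancellation it coincides with $e^{z(\theta_t\theta_\tau\omega)}v\bigl(t,\theta_\tau\omega;0,v(\tau,\omega;0,e^{-z(\omega)}u_0)\bigr)=\varphi(t+\tau,\omega)u_0$.

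The main obstacle is the pathwise well-posedness of (\ref{e:v-main}) when $B$ only takes values in $H^{-\alpha}$ rather than in $H$. However, Assumption~A$_3$ (bounded bilinearity together with the cancellation $\langle B(u),u\rangle=0$) is precisely tailored to close the $H$-energy estimate, so the deterministic theory goes through uniformly in $\omega$. Once this is in hand, the remaining ingredients of Definition~\ref{def:RDS}---measurability, continuity in $u_0$, and the cocycle identity---are essentially bookkeeping based on the substitution (\ref{e:trans}), the OU equation (\ref{e:ou}), and the shift property of $\theta_t$.
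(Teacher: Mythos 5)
Your proposal follows the same route the paper takes: conjugate via $v=e^{-z}u$ to the random PDE \eqref{e:v-main}, solve pathwise by the deterministic theory of \cite{Tem97} (where Assumption~A$_3$'s cancellation $\langle B(v),v\rangle=0$ closes the $H$-energy estimate in the Galerkin scheme), and transfer the resulting flow back through the stationary transformation. The paper leaves the cocycle verification implicit; you have correctly filled it in using the shift identity $z(\theta_{t+\tau}\omega)=z(\theta_t\theta_\tau\omega)$, uniqueness of the transformed flow, and the cancellation $e^{-z(\theta_\tau\omega)}e^{z(\theta_\tau\omega)}=1$ in the definition of $\varphi$.
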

For a continuous random dynamical system $\varphi$ on $\mathcal{X}$
given by Definition \ref{def:RDS}, we  need the following  notions
to describe its dynamical behavior. First for any subset
$M\subset\mathcal{X}$ define $d(M)=\sup_{x\in M}d(x,0)$ with
$0\in\mathcal{X}$ is the zero element in $\mathcal{X}$ and define
$B_R(0)=\{x\in\mathcal{X}: d(x,0)<R\}$\,.
\begin{definition}
A collection $M=M(\omega)_{\omega\in\Omega}$ of non-empty closed
sets $M(\omega)\subset \mathcal{X}$, $\omega\in\Omega$  is a random
set, if
\begin{equation*}
\omega\mapsto\inf_{y\in M(\omega)}d(x, y)
\end{equation*}
is a real valued random variable for any $x\in \mathcal{X}$.
\end{definition}
\begin{definition}
A random set $M(\omega)$ is called a tempered absorbing set for a
random dynamical system $\varphi$ if for any bounded set
$K\subset\mathcal{X}$ there exists $t_K(\omega)$ such that $\forall
t\geq t_K(\omega)$
\begin{equation*}
\varphi\big(t,\omega\big)K\subset M(\theta_t\omega).
\end{equation*}
and for all $\varepsilon>0$
\begin{eqnarray*}
 \lim_{t\rightarrow\infty}e^{-\varepsilon
t}d\big(M(\theta_t\omega)\big)=0, \ \text{for a.e.}\ \omega\in
\Omega\,.
\end{eqnarray*}

\end{definition}
For more details about random set we refer to \cite{CF94}.
\begin{definition}
A random set $M(\omega)$ is called a positive invariant set for a
random dynamical system $\varphi(t,\omega)$ if
\begin{equation*}
\varphi(t,\omega) M(\omega)\subset M(\theta_t\omega),\;\;for\;t\geq
0.
\end{equation*}
If $M(\omega)=\{x_1+\psi(\omega, x_1)| x_1\in \mathcal{X}_1 \}$ is
the graph of a random Lipschitz mapping
\begin{equation*}
\psi(\omega, \cdot): \mathcal{X}_1\rightarrow \mathcal{X}_2
\end{equation*}
with $\mathcal{X}=\mathcal{X}_1\oplus \mathcal{X}_2$,
then $M(\omega)$ is called a Lipschitz invariant manifold of
$\varphi$.
\end{definition}
For further details about the random invariant manifold theory,
see \cite{DLSch03}.
\begin{definition} \label{def:LRIM}
System (\ref{e:main}) is said to have a local random invariant
manifold (LRIM) with radius $R$, if there is a random set
$\mathcal{M}^R(\omega)$, which is defined by the graph of a random
continuous function $\psi(\omega,\cdot ):
\overline{B_R(0)}\cap\mathcal{X}_1\rightarrow \mathcal{X}_2$\,, such
that for all bounded sets $B$ in $B_R(0)\subset\mathcal{X}_1$ we have
$$
 \varphi(t, \omega)[\mathcal{M}^R(\omega)\cap B]\subset \mathcal{M}^R(\theta_t\omega)
$$
 for all $t\in(0,\tau_0(\omega))$ with
 \begin{equation}\label{e:tau0}
 \tau_0(\omega)=\tau_0(\omega,B) \\
 =\inf\{t\ge0:\varphi(t, \omega)[\mathcal{M}^R(\omega)\cap B]  \not\subset B_R(0)\}.
 \end{equation}
\end{definition}

In \cite{LSch07} the existence of a random
invariant set on a tempered ball around $0$
was established. This should be possible,
if we have a LRIM. But we will not focus on that.
In the following we are aiming on local properties
of the LRIM.

Due to the method of proof, where we rely on a cut-off
at radius $R$, we always obtain a globally defined
Lipschitz invariant manifold $\mathcal{M}(\omega)$
for the system with cut-off. Now, as both flows agree
on $B_R(0)$ it is easy to check that
$\mathcal{M}^R(\omega)=\mathcal{M}(\omega)\cap B_R(0)$
defines a LRIM. In the following, we will mainly work
with the globally defined $\mathcal{M}(\omega)$.

\subsection*{Main Results}

We prove the following theorem in Section \ref{sec:exLRIM}.
See Theorems \ref{thm:fixpoint} and \ref{thm:reduce}.
This is based on the properties for the system with cut-off.
\begin{theorem} {\bf (Existence)} \label{thm:exRIM}
Under Assumptions $A_1-A_3$, the random dynamical system $\varphi(t, \omega)$ defined by
(\ref{e:main}) has a LRIM $\mathcal{M}^R(\omega)$ for sufficiently small $R>0$.
This manifold is given as the graph of
a random Lipschitz map $h(\omega,\cdot):H_c\to H_s$:
 $$
\mathcal{M}^R(\omega)=\{(\xi,e^{z(\omega)}h(\omega,e^{-z(\omega)}\xi)):\ \xi \in H_c\}\;.
 $$
Moreover, if $\lambda_*>4\nu$, then the LRIM $\mathcal{M}^R(\omega)$
is locally exponentially attracting almost surely in the small ball
$B_R(0)$. That is for any $\|u_0\|<R$
 $$
dist\big( \varphi(t, \omega) u_0,
\mathcal{M}^R(\theta_t\omega)\big)\leq 2RD(t, \omega)e^{-\lambda_*t}
 $$
for all $t<\tau_0(\omega)=\inf\{t>0: \varphi(t, \omega) u_0\not\in B_r(0)\}$
with $D(t, \omega)$ is a tempered increasing process, see (\ref{e:D}).
 \end{theorem}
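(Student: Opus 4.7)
The plan is to combine a Lyapunov--Perron fixed-point construction for the random PDE (\ref{e:v-main}) in the $v$-variables with a cut-off technique to handle the quadratic nonlinearity, and then transform back to $u$ via (\ref{e:trans}). First I would replace $B(v,v)$ by $F_R(v)=\chi_R(\|v\|)B(v,v)$ for a smooth cut-off $\chi_R$ that equals $1$ on $[0,R]$ and vanishes outside $[0,2R]$. By Assumption $A_3$ the map $F_R:H\to H^{-\alpha}$ is globally Lipschitz with constant of order $C_BR$, and the cut-off equation
$$
\partial_t v=-Lv+(z+\nu)v+e^{z}F_R(v)
$$
agrees with (\ref{e:v-main}) on $\{\|v\|\leq R\}$.

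Second, I would set up the classical Perron integral equation on $(-\infty,0]$. For $\xi\in H_c$ I seek a trajectory $v(\cdot,\omega;\xi)$ with $P_cv(0)=\xi$; splitting by the projections and using variation of constants gives
\begin{align*}
v_c(t)&=e^{\nu t+\int_0^t z\,d\tau}\xi+\int_0^t e^{\nu(t-s)+\int_s^t z\,d\tau}e^{z(s)}P_cF_R(v(s))\,ds,\\
v_s(t)&=\int_{-\infty}^t e^{-L_s(t-s)+\nu(t-s)+\int_s^t z\,d\tau}e^{z(s)}P_sF_R(v(s))\,ds.
\end{align*}
I would seek a fixed point $v=\mathcal{T}(v)$ in the Banach space
$$
C_\eta^-=\Bigl\{v:(-\infty,0]\to H\text{ continuous} : \|v\|_\eta=\sup_{t\leq0}e^{\eta t-\nu t-\int_0^t z\,d\tau}\|v(t)\|<\infty\Bigr\}
$$
for a rate $\eta\in(0,\lambda_*-\nu)$. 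The weight is chosen precisely to cancel the random factor $e^{\nu(t-s)+\int_s^t z\,d\tau}$ coming from the Green's function, so that after this rescaling the estimates become deterministic. Combined with the bilinear bound of Assumption $A_3$ and the parabolic smoothing $\|e^{-L_st}\|_{H^{-\alpha}\to H}\leq Ct^{-\alpha/2}e^{-\lambda_*t}$, one checks that $\mathcal{T}$ is a contraction on $C_\eta^-$ with Lipschitz constant of order $C_BR\bigl(\eta^{-1}+(\lambda_*-\nu-\eta)^{\alpha/2-1}\bigr)$, which is strictly less than $1$ for $R$ small. The unique fixed point defines $h(\omega,\xi)=P_sv(0,\omega;\xi)$ as the desired random Lipschitz map $H_c\to H_s$ (Theorem~\ref{thm:fixpoint}).

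Third, measurability of $h$ in $\omega$ follows from the parameter version of the contraction principle applied to the measurable map $\omega\mapsto z(\theta_\cdot\omega)$; positive invariance of the graph $\mathcal{M}(\omega)$ under the cut-off flow follows from shift covariance of the Perron construction together with the cocycle property. The LRIM $\mathcal{M}^R(\omega)=\mathcal{M}(\omega)\cap B_R(0)$ is then obtained by restriction (the two flows agree inside $B_R$), and inverting $u=e^zv$ yields the stated representation. For the exponential attraction under $\lambda_*>4\nu$, I would compare $u(t)=\varphi(t,\omega)u_0$ with its projection onto $\mathcal{M}^R(\theta_t\omega)$: working in $v$-coordinates (valid while $u(t)\in B_R$) and using the Lipschitz bound on $h$, the quantity $w_s(t)=v_s(t)-h(\theta_t\omega,v_c(t))$ satisfies a linear variation-of-constants identity whose Gronwall estimate yields decay at rate $\lambda_*-\nu$ up to a tempered correction from $z$. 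The gap $\lambda_*>4\nu$ is precisely what is needed so that the $\nu v$ term on the center together with the Lipschitz constant of $h$ do not destroy the $e^{-\lambda_*t}$ rate. The main obstacle is the careful bookkeeping of the random exponential weights: one must verify that the contraction constant is in fact deterministic and that the factor $D(t,\omega)$ appearing in the attraction bound is tempered, which is what forces the weighted space above and relies on the sub-linear growth of $\int_0^t z(\theta_s\omega)\,ds$.
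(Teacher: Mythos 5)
Your overall roadmap (cut-off to get a globally Lipschitz nonlinearity, Lyapunov--Perron fixed point in a weighted space with a random exponential weight, then conjugation back via $u=e^{z}v$, followed by a decay estimate on the stable part) matches the paper's strategy, but there is a genuine gap at the very first step. You cut off in the $v$-variable, replacing $B(v,v)$ by $F_R(v)=\chi_R(\|v\|)B(v,v)$ and then plugging $e^{z}F_R(v)$ into the Perron integral equation. This destroys the determinism of the contraction constant: the Lipschitz bound of $v\mapsto e^{z}F_R(v)$ from $H$ to $H^{-\alpha}$ is $\sim 2RC_B\,e^{z(s)}$, and the extra $e^{z(s)}$ does \emph{not} cancel against the weight $e^{\eta t-\nu t-\int_0^t z}$, because that weight only removes the fundamental-solution factor $e^{\nu(t-s)+\int_s^t z}$. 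Hence $\mathrm{Lip}(\mathcal{T})$ is $\omega$-dependent, and you cannot choose one deterministic $R$ for which $\mathcal{T}$ is a contraction almost surely. The paper instead cuts off in the original $u$-variable, $B^{(R)}(u)=\chi_R(u)B(u,u)$, so that after the transformation the nonlinearity is $e^{-z(\tau)}B^{(R)}(e^{z(\tau)}v(\tau))$, which has the deterministic Lipschitz constant $L_R=2RC_B$ (the $e^{-z}$ and $e^{z}$ factors cancel inside the Lipschitz estimate). This is exactly what makes the deterministic smallness condition (\ref{e:condition1}) possible, and also why the cut-off flow agrees with the original one on the \emph{deterministic} ball $B_R(0)$ in $u$-space, as required for the LRIM statement; your version only agrees on the random ball $\{\|u\|\le Re^{z(\omega)}\}$.

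The attraction argument also differs from the paper in a way that is not innocuous. You propose a Gronwall estimate on $w_s(t)=v_s(t)-h(\theta_t\omega,v_c(t))$, which would require differentiating $h$ in both its $\omega$- and $\xi$-arguments along the flow; but $h$ is constructed merely as a Lipschitz fixed point, so this derivative need not exist. The paper avoids this by proving a squeezing/cone-invariance property (Lemma \ref{lem:cone}, using the explicit conditions (\ref{e:condition2}) and (\ref{e:condition3}), which is where the hypothesis $\lambda_*>4\nu$ enters), and then combines it with the existence of a backward-in-time orbit on the manifold (Lemma \ref{lem:backward sol}) and the contradiction argument of \cite{WD07} (Theorem \ref{thm:reduce}). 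That construction produces an actual shadowing orbit $U(t)$ on $\mathcal{M}^R_{cut}(\theta_t\omega)$ with $\|u-U\|\le D(t,\omega)\|u(0)-U(0)\|e^{-\lambda_* t}$, which is stronger than, and implies, the claimed distance bound. If you want to salvage your Gronwall route you would at minimum need to replace $h(\theta_t\omega,v_c(t))$ by the stable component of a genuine orbit on the manifold, which is precisely what the paper's backward-solution lemma provides.
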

We are interested in the property of solutions on the LRIM
$\mathcal{M}^R(\omega)$ and the
dynamics of $\mathcal{M}^R(\omega)$ itself
 near the first bifurcation $\nu=0$.

It is proved that

\begin{theorem} {\bf (Local Shape)} \label{thm:shape}
Let Assumptions $A_1-A_3$ be true. Suppose $\sigma>0$,
$|\nu|<\sigma$ and $R\le1$, and let $h$ be the fixed point given by
Theorem \ref{thm:fixpoint}. Then for $\sigma \to 0$,
\begin{equation}\label{e:claim_shape}
 \|e^{z(\omega)}h(\omega, e^{-z(\omega)}\xi) -  L_s^{-1}B_s(\xi,\xi)\|
\le C(\|\xi\|+R^2+\sqrt\sigma )\cdot\|\xi\|^2\;,
\end{equation}
holds with probability larger than $1-C\exp\{-1/\sqrt{\sigma}\}$ for
all $\|\xi\| \le \tfrac12 R$.
\end{theorem}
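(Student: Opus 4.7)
The strategy is to expand the Perron fixed point equation that characterises $h$ around the naïve quadratic ansatz $L_s^{-1}B_s(\xi,\xi)$, and to show that every remainder is either of order $R^2\|\xi\|^2$ or $\|\xi\|^3$ deterministically, or of order $\sqrt{\sigma}\|\xi\|^2$ on a suitable high-probability event. Writing $\zeta(s):=z(\theta_s\omega)$, denoting by $v_c(s,\omega,\eta)$ the backward centre orbit on the graph with $v_c(0)=\eta$, and setting $V(s):=v_c(s)+h(\theta_s\omega,v_c(s))$, Theorem~\ref{thm:fixpoint} supplies the representation
\begin{equation*}
h(\omega,\eta)=\int_{-\infty}^0 e^{L_s s}\Bigl[(\zeta(s)+\nu)\,h(\theta_s\omega,v_c(s))+e^{\zeta(s)}B_s\bigl(V(s),V(s)\bigr)\Bigr]\,ds.
\end{equation*}
I would substitute $\eta=e^{-z(\omega)}\xi$ and multiply through by $e^{z(\omega)}$; the leading contribution $L_s^{-1}B_s(\xi,\xi)=\int_{-\infty}^0 e^{L_s s}B_s(\xi,\xi)\,ds$ should emerge from the second term once $V(s)$ is replaced by $\xi$ and the various exponentials by $1$.

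The structural ingredients are the Lipschitz estimate on $h$ from Theorem~\ref{thm:fixpoint}, which combined with $h(\omega,0)=0$ and the quadratic nature of $B_s$ gives a bootstrap bound $\|h(\omega,\eta)\|\le C\|\eta\|^2$ on the cut-off ball, and the description of the backward centre orbit as essentially linear,
\begin{equation*}
v_c(s)=\exp\!\Bigl(\nu s+\int_0^s\zeta(\tau)\,d\tau\Bigr)\eta+O(R\|\eta\|^2).
\end{equation*}
With these, the first term in the integral is at most $C(|\zeta(s)|+|\nu|)\|\eta\|^2$, yielding $O((\sqrt{\sigma}+|\nu|)\|\xi\|^2)$ on the good event, while expanding $B_s(V,V)=B_s(v_c,v_c)+2B_s(v_c,h)+B_s(h,h)$ together with the quadratic bound on $h$ produces a correction $O((R\|\xi\|+\|\xi\|^2)\|\xi\|^2)$ after integration against $e^{L_s s}$; both are absorbed in $C(\|\xi\|+R^2+\sqrt{\sigma})\|\xi\|^2$.

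The core estimate then reduces to controlling
\begin{equation*}
\int_{-\infty}^0 e^{L_s s}\Bigl(e^{\zeta(s)-z(\omega)+2\int_0^s\zeta(\tau)d\tau+2\nu s}-1\Bigr)B_s(\xi,\xi)\,ds,
\end{equation*}
and using~(\ref{e:ou}) the exponent simplifies to $-\zeta(s)+z(\omega)+2\sigma\omega(s)+2\nu s$. I would split at $s=-T$ with $T=\sigma^{-1/2}$: on $[-T,0]$, on the event $A_\sigma:=\{\sup_{[-T,0]}|\zeta|\le C\sqrt{\sigma}\}\cap\{\sigma\sup_{[-T,0]}|\omega|\le C\sqrt{\sigma}\}$ the whole exponent is $O(\sqrt{\sigma})$, so $|e^{(\cdot)}-1|\le C\sqrt{\sigma}$ and the integral is $O(\sqrt{\sigma}\|\xi\|^2)$; on $(-\infty,-T]$ the spectral gap $\lambda_*>0$ in $e^{L_s s}$ dominates the at most linear growth of the exponent (using $|\nu|<\sigma$ together with the tempered growth of $\zeta$ and $\omega$), giving an exponentially small tail $O(e^{-\lambda_* T/2})\ll\sqrt{\sigma}$. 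Borell--TIS applied to the stationary Gaussian $\zeta$ (variance $\sim\sigma^2$) gives $\mathbb{P}(\sup_{[-T,0]}|\zeta|>\sqrt{\sigma})\le C\exp(-c/\sigma)$, while the reflection principle for $\omega$ on $[-T,0]$ gives $\mathbb{P}(\sigma\sup|\omega|>\sqrt{\sigma})\le C\exp(-c/\sqrt{\sigma})$; combining, $\mathbb{P}(A_\sigma^c)\le C\exp(-1/\sqrt{\sigma})$, matching the claimed probability. The main obstacle I expect is the circularity of the expansion: the approximations for $v_c$ and $V$ themselves depend on $h$ through the fixed point equation, so I would phrase the whole argument as a one-shot a priori estimate, using the Lipschitz control of $h$ from Theorem~\ref{thm:fixpoint} to absorb the nonlinear feedback into the stated remainder $C(\|\xi\|+R^2+\sqrt{\sigma})\|\xi\|^2$.
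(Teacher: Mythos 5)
Your overall strategy is the same as the paper's: expand the Lyapunov--Perron fixed-point formula for $h$, replace the backward centre orbit by the linearised one, strip off the cut-off and the random exponentials, and leave $L_s^{-1}B_s(\xi,\xi)$ plus remainders. There are, however, three genuine gaps in the way you carry this out.

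First, the bootstrap bound $\|h(\omega,\eta)\|\le C\|\eta\|^2$ does not follow from the Lipschitz property of $h$ and $h(\omega,0)=0$ alone --- those give only $\|h(\omega,\eta)\|\le L\|\eta\|$. The quadratic bound is in fact part of what Theorem~\ref{thm:shape} is trying to prove, so inserting it as an a priori fact is circular. The paper avoids this: Lemma~\ref{lem:som1} proves only the linear estimate $\|v^*_s\|_{C^-_\eta}\le CR\|\xi\|$ directly from the fixed-point equation, and the quadratic structure emerges only at the end, after the chain $v_s^*\to g_1^*\to g_2^*\to g_3^*$ has successively peeled off the nonlinear feedback.

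Second, your decomposition drops the cut-off $\chi_R^*(\tau)=\chi_R(v^*(\tau)e^{z(\tau)})$. This is not harmless: the backward centre orbit grows like $e^{-\eta\tau}e^{\int_0^\tau z}$ as $\tau\to-\infty$ (this is built into the space $C^-_\eta$), so it necessarily leaves the ball $B_R(0)$ for $\tau$ far enough in the past and the cut-off is active there. In particular your ``essentially linear'' description $v_c(s)=e^{\nu s+\int_0^s\zeta}\eta+O(R\|\eta\|^2)$ cannot be a uniform bound; the paper's corresponding estimate~(\ref{e:bvcxi}) carries the unavoidable growth factor $e^{-(\nu+\eta)\tau}$. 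A large part of Section~\ref{sec:shapeIM} is devoted to bounding $|\chi_R^*(\tau)-1|$ carefully (estimate~(\ref{e:remcut})), and that work cannot be skipped.

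Third, your split of the time integral at $T=\sigma^{-1/2}$ does not close. On $[-T,0]$ the event $A_\sigma$ controls $\zeta$ and $\sigma\omega$, fine. But on $(-\infty,-T]$ you invoke only the ``tempered growth'' of $\zeta$ and $\omega$, which is an almost-sure qualitative statement with no quantitative tail; to produce a probability bound for the remainder you still need a random variable controlling the sublinear growth of $\omega$ on all of $(-\infty,0]$ together with an explicit tail estimate for it. This is exactly the device the paper uses instead of splitting: Lemma~\ref{lem:bouz} introduces $\tilde K(\omega)=\sup_{s\le0}\{\omega(s)+s\}$, for which Yor's result gives a standard exponential law, so that $\sigma\omega(t)\le\sigma(K(\omega)+|t|)$ holds pathwise on all of $(-\infty,0]$. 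Combined with the spectral gap this gives a single, time-uniform estimate, and the probability $1-C\exp\{-1/\sqrt\sigma\}$ drops out directly from $\mathbb P(K^\pm>1/\sqrt\sigma)$. If you wanted to keep your Borell--TIS / reflection-principle framework you would in any case have to supplement the split with a $K$-type random variable for the far tail, at which point the split buys nothing. Your identification of the exponent $-\zeta(s)+z(\omega)+2\sigma\omega(s)+2\nu s$ via~(\ref{e:ou}) is correct and matches what the paper does after factoring out $e^{z(0)}$; the issue is only how you turn the pathwise bound into a probability statement.
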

\begin{remark}
The previous result will not apply to deterministic equations,
as we always assume $|\nu|<\sigma$, but it allows for large
radii $R<1$ for the cut-off.
\end{remark}

Let us give a more general example than the Burgers'
equation already mentioned.
If we consider the space $H_c$ being one-dimensional,
then we can write $\xi=\alpha\cdot e$ for a basis function $e\in H_c$.

Thus the invariant manifold $\mathcal{M}^R(\omega)$
is locally given (with high probability as the graph of
$$L_s^{-1}B_s(\xi,\xi) = \alpha^2 L_s^{-1}B_s(e,e) =: \alpha^2 v_s \in H_s.$$
Now $\mathcal{M}^R(\omega)$ lies approximately in the
plane spanned by $e$ and $v_s$ and is given by a parabola,
unless $v_s=0$. In that case the manifold is approximately flat.

Let us finally comment on the flow on the manifold. Using Theorem
\ref{thm:exRIM}, it is easy to describe the flow along the invariant
manifold $\mathcal{M}(\omega)$ on a small ball of radius $R/2$
around $0$ by the following equation
$$
\partial_t u_c
=\nu u_c+P_cB^{(R)}(u_c+e^{z(\theta_t\omega)}h(\theta_t\omega,
e^{-z(\theta_t\omega)}u_c))+ \sigma u_c\circ \dot{W}(t).
$$
By Theorem \ref{thm:shape} we can replace
$e^{z(\theta_t\omega)}h(\theta_t\omega, e^{-z(\theta_t\omega)}u_c)$
by
\begin{equation}
\tag{*} L_s^{-1}B_s(u_c, u_c)+r(t)\,,
\end{equation}
where the probability of $r(t)$ being large is bounded by
$C\exp\{-1/\sqrt{\sigma}\}$.

This finally yields  an  equation, where
$$
\partial_t u_c=\nu u_c+P_cB^{(R)}(u_c+L_s^{-1}B_s(u_c,u_c)))+
 \sigma u_c\circ \dot{W}(t).
$$
This rederives an amplitude equation for the equation. Nevertheless,
for a detailed analysis of the flow on $\mathcal{M}$, we would need
bound on $r$, which are uniform in time. This will be postponed to
future work.

To conclude the presentation of the main results,
we remark that it should be straightforward to generalize
the presented results to higher, but finite dimensional noise.


\section{Existence of Local Invariant Manifold}
\label{sec:exLRIM}


In this section we construct a LRIM for system (\ref{e:main})
for small $R>0$ and prove its exponentially attracting property.
For this we rely on a cut-off technique given by the following definition.
As explained after Definition \ref{def:LRIM} (cf. also \cite{CLR00,LSch07}),
the LRIM is given by the RIM for the cut-off system.

\begin{definition}\label{def:cutoff}{\bf (cut-off)}
Let $\chi:H\rightarrow \mathbb{R}$ be a bounded smooth function such
that $\chi(u)=1$ if $\|u\|\leq 1$ and $\chi(u)=0$ if $\|u\|\geq 2$.
For any  $R>0$, we define $\chi_R(u)=\chi(u/R)$ for all $u\in H$.

Given a radius $R>0$ we define
$$
B^{(R)}(u)= \chi_R(u) B(u, u).
$$
\end{definition}
Now by Assumption (A$_3$) for given $1>\alpha>0$, the operator
$B^{(R)}$ is globally Lipschitz-continuous from space $H$ to $H^{-\alpha}$
with Lipschitz  constant
\begin{equation}
 \label{e:Lipcut}
\text{Lip}_{H, H^{-\alpha}}(B^{(R)})= L_R:=2RC_B\,.
\end{equation}
Consider now  the following cut-off system
\begin{equation}
\label{e:cut-off}
\partial_t u = -Lu +\nu u +B^{(R)}(u) + \sigma u \circ \dot{W}\,,\qquad
 u(0)=u_0\,.
\end{equation}
As before by the transformation $v=ue^{-z}$
(with $z=z(t)=z(\theta_t\omega)$),
we have
\begin{equation}\label{e:rand eq}
v_t=-L v+z v+\nu v+e^{-z}B^{(R)}(e^z v) \,,\qquad
 v(0)=u_0e^{-z(0)}\,.
\end{equation}
In order to obtain a random invariant manifold for random dynamical
system $\varphi^R(t, \omega)$ defined by the above stochastic
equation (\ref{e:cut-off}) we study the transformed equation
(\ref{e:rand eq}). By the projections $P_c$ and $P_s$ equation
(\ref{e:rand eq}) is split into
\begin{eqnarray*}
\partial_tv_c =&&
\nu v_c+zv_c+P_ce^{-z}B^{(R)}(e^z v),\qquad
v_c(0)=P_c u_0e^{-z(0)},\\
\partial_tv_s =&
-L_sv_s+&\nu v_s+zv_s+P_se^{-z}B^{(R)}(e^z v),\qquad
v_s(0)=P_su_0e^{-z(0)}.
\end{eqnarray*}
We use the Lyapunov--Perron method on  the following random space
with random norm depending on $\omega\in\Omega$. The process $z$ was
defined in (\ref{e:ou}).
\begin{definition}\label{def:Ceta-space}
For  $-\nu<\eta< \lambda_*-\nu$  define the Banach space
\begin{equation}\label{def:space}
C^-_\eta=\Big\{v\in C((-\infty, 0], H): \sup_{t\le0}
\{e^{\eta t-\int_0^t z(s)ds}\|v(t)\|\} <\infty\Big\}
\end{equation}
with norm
$$
\|v\|_{C^-_\eta}=\sup_{t\le0}\{e^{\eta
t-\int_0^tz(\tau)d\tau}\|v(t)\|\} <\infty.
$$
\end{definition}
\begin{definition}\label{def:operator}
Define  the nonlinear operator $\mathcal{T}$ on $C^-_\eta$ for  given $\xi\in H_c$
and $\omega\in \Omega_0$ as
\begin{eqnarray}
\mathcal{T}(v, \xi)(t)&=&
e^{\nu t+\int_0^t z(s)ds}\xi
+ \int_0^te^{\nu (t-\tau)+ \int_\tau^t z(r)dr}
e^{-z(\tau)}P_cB^{(R)}\left(v(\tau)e^{z(\tau)}\right) d\tau \nonumber\\
&&+\int^t_{-\infty}e^{(-L_s+\nu)(t-\tau)
+\int_\tau^tz(r)dr}e^{-z(\tau)}P_sB^{(R)}\left((v(\tau)e^{z(\tau)}\right)d\tau
\,. \label{def:T}
\end{eqnarray}
\end{definition}
 By the Lipschitz property of $B^{(R)}$ (cf. (\ref{e:Lipcut}))
 it can be verified directly that for any $\xi\in H_c$ and
 $\omega\in\Omega_0$\,,
 $\mathcal{T}(\cdot, \xi):C_\eta^-\rightarrow
C_\eta^-$. Further, a short calculation shows that it is a Lipschitz
continuous map. To be more precise, for any $v$\,, $\bar{v}\in
C_\eta^-$
\begin{eqnarray*}
\lefteqn{\big[\mathcal{T}(v)(t)-\mathcal{T}(\bar{v})(t)\big] e^{-\int_0^t z(r)dr}}\\
&=&
+ \int_0^te^{\nu (t-\tau)+ \int_0^\tau z(r)dr-z(\tau)}
\left[ P_cB^{(R)}\left(v(\tau)e^{z(\tau)}\right)
    - P_cB^{(R)}\left(\bar{v}(\tau)e^{z(\tau)}\right)\right] d\tau\\
&&+\int^t_{-\infty}e^{(-L_s+\nu)(t-\tau)-\int_0^\tau z(r)dr-z(\tau)}
\left[ P_sB^{(R)}\left((v(\tau)e^{z(\tau)}\right)
    -P_sB^{(R)}\left((\bar{v}(\tau)e^{z(\tau)}\right)\right]d\tau\,.
\end{eqnarray*}
Now we use that for all $t >0$ and all $u\in H_s$
\begin{equation}\label{e:SG}
 \|e^{Lt} u\| \le M_{\alpha,\lambda} e^{- t\lambda}t^{-\alpha} \|u\|_{-\alpha}
\end{equation}
for some $\lambda<\lambda_*$ sufficiently close,
together with (\ref{e:Lipcut}) and the estimate
$\|P_c v\| \le C_\alpha \|P_c\|_{-\alpha}$
for some constant $C_\alpha$.

We obtain for $0<\eta+\nu<\lambda$
\begin{eqnarray*}
\|\mathcal{T}(v)-\mathcal{T}(\bar{v})\|_{C^-_\eta}
&\leq &
\int_0^te^{(\nu+\eta) (t-\tau)} C_\alpha L_R
    \|v-\bar{v}\|_{C^-_\eta} d\tau\\
&&  +\int^t_{-\infty}e^{(-\lambda+\nu+\eta)(t-\tau)} M_{\alpha,\lambda}L_R
    \|v-\bar{v}\|_{C^-_\eta}d\tau \\
&\leq &
L_R \Big[\frac{C_\alpha}{\eta+\nu}
+ M_{\alpha,\lambda}\frac{\Gamma(1-\alpha)}{(\lambda-\eta-\nu)^{1-\alpha}}\Big]
 \|v-\bar{v}\|_{C^-_\eta} \; .
\end{eqnarray*}
Note that our bound on $\text{Lip}(\mathcal{T})$
is actually  independent of $\xi$ and $\omega$.

Now for all $\xi \in H_c$ and $\omega\in \Omega_0$ the operator
$\mathcal{T}: C_\eta^-\rightarrow C_\eta^-$ is a contraction
provided $R$ is sufficiently small such that
\begin{equation}\label{e:condition1}
L_R \Big[\frac{C_\alpha}{\eta+\nu}
+ M_{\alpha,\lambda}\frac{\Gamma(1-\alpha)}{(\lambda-\eta-\nu)^{1-\alpha}}\Big]< 1.
\end{equation}
The celebrated theorem of  Banach  yields the following theorem.
\begin{theorem} \label{thm:fixpoint}
Suppose $0<\eta+\nu<\lambda_*$ and (\ref{e:condition1}).
Then the  operator  $\mathcal{T}$ has a
unique fixed point $v^*=v^*(\omega,\xi)\in C_\eta^-$.
\end{theorem}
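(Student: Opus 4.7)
The plan is to apply the Banach fixed point theorem to $\mathcal{T}(\cdot,\xi):C_\eta^-\to C_\eta^-$, with the parameters $\xi\in H_c$ and $\omega\in\Omega_0$ held fixed. The contraction estimate has essentially already been assembled in the text preceding the statement, so the work consists of three things: (i) checking that $\mathcal{T}(v,\xi)$ really lies in $C_\eta^-$ when $v$ does, (ii) turning the displayed Lipschitz inequality into a genuine contraction under (\ref{e:condition1}), and (iii) verifying that the fixed point depends measurably on $\omega$ and continuously on $\xi$ so that the notation $v^*(\omega,\xi)$ is justified.

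For (i) I would bound each of the three summands in the definition (\ref{def:T}) in the weighted norm $\|\cdot\|_{C^-_\eta}$. The first term $e^{\nu t+\int_0^t z\,ds}\xi$ has weighted norm $\sup_{t\leq 0}e^{(\nu+\eta)t}\|\xi\|=\|\xi\|$ since $\nu+\eta>0$. For the $H_c$-integral, I use $\|P_c B^{(R)}(w)\|\leq C_\alpha \|B^{(R)}(w)\|_{-\alpha}\leq 2RC_B C_\alpha \|w\|$ (the cut-off making $w\mapsto B^{(R)}(w)$ globally bounded-linear in norm by $L_R\|w\|$), plugging in $w=v(\tau)e^{z(\tau)}$, and then exactly the computation the authors performed shows the weighted norm is finite. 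For the $H_s$-integral, the smoothing estimate (\ref{e:SG}) of the analytic semigroup $e^{-L_s t}$ from $H^{-\alpha}$ to $H$ together with integrability of $\tau^{-\alpha}e^{-\lambda\tau}$ near $\tau=0$ produces a finite weighted bound. The combined estimate coincides, up to the additive $\|\xi\|$ contribution, with the right-hand side of the Lipschitz bound already displayed, so $\mathcal{T}(\cdot,\xi):C_\eta^-\to C_\eta^-$ is well-defined.

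For (ii) I would simply quote the Lipschitz computation already carried out in the excerpt: the resulting Lipschitz constant of $\mathcal{T}(\cdot,\xi)$ is
\[
L_R\Big[\tfrac{C_\alpha}{\eta+\nu}+M_{\alpha,\lambda}\tfrac{\Gamma(1-\alpha)}{(\lambda-\eta-\nu)^{1-\alpha}}\Big],
\]
which by hypothesis (\ref{e:condition1}) is strictly less than $1$ and, crucially, is independent of both $\xi$ and $\omega$. The Banach fixed point theorem in the complete metric space $C_\eta^-$ then gives a unique fixed point $v^*(\omega,\xi)\in C_\eta^-$.

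The only remaining issue, and the one point where some genuine care is needed, is (iii): since $v^*$ is to play the role of a random variable with parameter $\xi$, one should verify joint measurability in $\omega$ and continuity in $\xi$. I would argue this via Picard iteration: starting from $v_0\equiv 0$, the sequence $v_{n+1}=\mathcal{T}(v_n,\xi)$ converges in $C_\eta^-$ to $v^*$ at the geometric rate determined by the contraction constant. Each iterate is jointly measurable in $\omega$ and continuous in $\xi$, because $\mathcal{T}$ depends on $\omega$ only through the continuous stationary process $z(\theta_\cdot\omega)$ and depends affinely on $\xi$ through the first summand. Measurability and continuity pass to the uniform limit, so $v^*(\omega,\xi)$ is measurable in $\omega$ and (Lipschitz-)continuous in $\xi$, completing the proof. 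The technical obstacle, if any, is just bookkeeping of the weighted norms involving $\int_0^t z\,ds$ when verifying well-definedness, but no new estimate beyond what the authors have already written is needed.
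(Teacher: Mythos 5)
Your proposal follows the paper's argument essentially verbatim: verify that $\mathcal{T}(\cdot,\xi)$ maps $C^-_\eta$ into itself, use the displayed Lipschitz estimate together with condition (\ref{e:condition1}) to obtain a contraction with constant uniform in $\xi$ and $\omega$, and apply the Banach fixed point theorem. Your point (iii) on measurability in $\omega$ and continuity in $\xi$ via Picard iteration is a sensible extra detail that the paper leaves implicit, but it does not change the route.
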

Define  $h(\omega,\xi)=P_s v^*(0,\omega; \xi)$. Thus, by Definition \ref{def:operator}
 \begin{equation}\label{e:h}
h(\omega, \xi)=\int^0_{-\infty} e^{(L_s-\nu)\tau+\int_\tau^0z(r)dr}
e^{-z(\tau)}P_sB^{(R)}(v^*(\tau,\xi)e^{z(\tau)})d\tau.
 \end{equation}
Then by the same discussion as in \cite{DLSch04},
\begin{equation}\label{e:RIM}
\mathcal{M}_{cut}^R(\omega)=\{(\xi, e^{z(\omega)}h(\omega,
e^{-z(\omega)}\xi) ): \xi\in H_c \}
 \end{equation}
is a random invariant manifold for the random dynamical system
$\varphi^R(t, \omega)$,
 which is the graph of $e^{z(\omega)}h(\omega,e^{-z(\omega)}\xi)$\,.
Now define a Lipschitz mapping~$\psi$~by
\begin{eqnarray*}
\psi(\omega, \cdot): H_c\cap B_R(0)&\rightarrow& H_s\,,\\
\xi&\rightarrow&\psi(\omega,\xi)=e^{z(\omega)}h(\omega,
e^{-z(\omega)}\xi)\,.
\end{eqnarray*}
Then, as already indicated after the definition, it
is easy to check, that
\begin{equation}\label{e:LRIM}
 \mathcal{M}^R(\omega)=\text{graph}(\psi(\omega,\cdot))=\mathcal{M}^R_{cut}(\omega)\cap B_R(0)
 \end{equation}
defines a LRIM of the random dynamical system $\varphi(t,\omega)$.

Now we prove the attracting property of the random invariant
manifold $\mathcal{M}_{cut}^R(\omega)$ for equation
(\ref{e:cut-off}). We follow the approach of \cite{WD07}. It is
enough to prove the cone invariance property:
\begin{lemma}
 \label{lem:cone}
Fix $\delta>0$ and  define the cone
$$
{\cal K}_\delta = \{ u\in H\ :\ \|u_s\|  < \delta\|u_c\|  \|\}\;.
$$
Suppose that $R$ is sufficiently small such that
\begin{equation}\label{e:condition2}
\lambda_* \geq 2(1+\tfrac1\delta)^2L_R^2 + 4(1+\delta)L_R
\end{equation}
and
\begin{equation}\label{e:condition3}
\lambda_* > 4\nu  +2 L_R^2(1+\tfrac1\delta)^2\;.
\end{equation}
Let $v$, $\bar{v}$ be two solutions of (\ref{e:rand eq}) with
initial value $v_0=u_0e^{z(\omega)}$ and $\bar v_0=\bar
u_0e^{z(\omega)}$.

If $v(t_0)-\bar{v}(t_0)\in {\cal K}_\delta$, then
$v_c(t)-\bar{v}_c(t)\in {\cal K}_\delta$ for all $t\ge t_0$\,.

Moreover, if $v-\bar{v}$ is outside ${\cal K}_\delta$ at some time
$t_0$ then
\begin{eqnarray}
\label{e:bou_q}
\lefteqn{ \|v_s(t,\omega)-\bar{v}_s(t,\omega)\|^2}\\
&\le& \|u_0-\bar{u}_0\|^2
 \exp\left\{-\tfrac12\lambda_*t+z(\omega)+2\int_0^t z(\theta_\tau\omega)d\tau \right\}
\nonumber
\end{eqnarray}
 for  all $t\in[0,t_0]$.\\
\end{lemma}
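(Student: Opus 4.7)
The plan is a cone-invariance argument built on energy estimates for the difference $w = v - \bar v$. Writing $w_c = P_c w$, $w_s = P_s w$ and $N(\tau) = B^{(R)}(e^{z(\tau)} v(\tau)) - B^{(R)}(e^{z(\tau)} \bar v(\tau))$, the components satisfy
\begin{align*}
\partial_t w_c &= (\nu + z) w_c + e^{-z} P_c N, \\
\partial_t w_s &= -L_s w_s + (\nu + z) w_s + e^{-z} P_s N,
\end{align*}
with $\|N\|_{-\alpha} \le L_R e^z \|w\|$ by (\ref{e:Lipcut}). Taking scalar products and using $\langle L_s w_s, w_s\rangle \ge \lambda_* \|w_s\|^2$ yields the energy identities
\begin{align*}
\tfrac12 \tfrac{d}{dt}\|w_s\|^2 &\le (\nu + z - \lambda_*)\|w_s\|^2 + e^{-z}\langle P_s N, w_s\rangle,\\
\tfrac12 \tfrac{d}{dt}\|w_c\|^2 &= (\nu + z)\|w_c\|^2 + e^{-z}\langle P_c N, w_c\rangle.
\end{align*}
The duality pairings are handled via $|\langle P_c N, w_c\rangle| \le C \|P_c N\|_{-\alpha}\|w_c\|$ (using norm equivalence on the finite-dimensional $H_c$) and $|\langle P_s N, w_s\rangle| \le \|P_s N\|_{-\alpha} \|w_s\|_\alpha$, the latter being estimated through the interpolation $\|w_s\|_\alpha \le C\|L_s^{1/2}w_s\|^\alpha \|w_s\|^{1-\alpha}$ plus Young's inequality, so that the $\|L_s^{1/2}w_s\|^2$-contribution gets absorbed into the spectral-gap term $-\lambda_*\|w_s\|^2$.

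For cone invariance I would use a Nagumo-type boundary argument: it is enough to verify $\tfrac{d}{dt}\Phi \le 0$ on the set $\{\Phi = 0\}$, where $\Phi := \|w_s\|^2 - \delta^2 \|w_c\|^2$. Subtracting $\delta^2$ times the $w_c$-identity from the $w_s$-identity, the $(\nu + z)$-terms cancel whenever $\Phi = 0$, leaving
\[
\tfrac12 \tfrac{d}{dt}\Phi \le -\lambda_*\|w_s\|^2 + e^{-z}\langle P_s N, w_s\rangle - \delta^2 e^{-z}\langle P_c N, w_c\rangle.
\]
On $\{\Phi = 0\}$ one has $\|w_c\| = \|w_s\|/\delta$ and hence $\|w\| \le (1 + 1/\delta)\|w_s\|$. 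Combining the Lipschitz bound on $N$ with Young's inequality produces terms of the form $L_R(1 + \delta)\|w_s\|^2$ and $L_R^2(1 + 1/\delta)^2 \|w_s\|^2$, both absorbable into $\lambda_*\|w_s\|^2$ under condition (\ref{e:condition2}), which closes cone invariance.

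For the exterior estimate, if $w(t_0) \notin \mathcal{K}_\delta$ then by cone invariance (applied contrapositively) $w(\tau) \notin \mathcal{K}_\delta$ for every $\tau \in [0, t_0]$, so $\|w(\tau)\| \le (1 + 1/\delta)\|w_s(\tau)\|$ throughout this interval. Substituting into the $\|w_s\|^2$ energy estimate and applying Young's inequality of the form $L_R\|w\|\|w_s\| \le \tfrac{\lambda_*}{2}\|w_s\|^2 + \tfrac{L_R^2}{2\lambda_*}\|w\|^2$ together with condition (\ref{e:condition3}) gives
\[
\tfrac{d}{dt}\|w_s\|^2 \le \Bigl(-\tfrac{\lambda_*}{2} + 2z\Bigr)\|w_s\|^2.
\]
Gronwall then yields $\|w_s(t)\|^2 \le \|w_s(0)\|^2 \exp\bigl\{-\tfrac12 \lambda_* t + 2\int_0^t z(\theta_\tau\omega)\,d\tau\bigr\}$, and (\ref{e:bou_q}) follows after relating $\|w_s(0)\|$ to $\|u_0 - \bar u_0\|$ via the transformation $v = u e^{-z}$ and absorbing the resulting $z(\omega)$-factor into the exponential prefactor.

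The main technical obstacle is controlling $\langle P_s N, w_s\rangle$: because $B^{(R)}$ is only Lipschitz into $H^{-\alpha}$ the pairing cannot be closed by Cauchy--Schwarz in $H$ directly; duality, an interpolation inequality for the $H^\alpha$-norm, and a carefully tuned Young's inequality must be used in concert, and the exact choice of Young weights is what produces the particular constants appearing in (\ref{e:condition2})--(\ref{e:condition3}).
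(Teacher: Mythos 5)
Your proposal follows the same route as the paper: energy estimates for the components of $w=v-\bar v$, a Nagumo-type argument on $\Phi=\|w_s\|^2-\delta^2\|w_c\|^2$ for cone invariance, and then the contrapositive of invariance plus a Gronwall/comparison argument for the exterior bound. The decomposition, the role of the cone boundary (where the $(\nu+z)$-terms cancel), the use of conditions (\ref{e:condition2}) and (\ref{e:condition3}), and the final translation from $\|w_s(0)\|$ to $\|u_0-\bar u_0\|$ all match the paper's proof.

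There is, however, one genuine slip. You apply the spectral lower bound $\langle L_sw_s,w_s\rangle\ge\lambda_*\|w_s\|^2$ immediately, writing $\tfrac12\tfrac{d}{dt}\|w_s\|^2\le(\nu+z-\lambda_*)\|w_s\|^2+e^{-z}\langle P_sN,w_s\rangle$, and then propose to absorb the $\|L_s^{1/2}w_s\|^2$-contribution coming from interpolation and Young into the term $-\lambda_*\|w_s\|^2$. This absorption cannot work: $L_s$ is unbounded on $H_s$, so $\|L_s^{1/2}w_s\|^2$ is \emph{not} controlled by $\|w_s\|^2$; it dominates it, not the other way around. The fix is exactly what the paper does: keep the full dissipation $-\|w_s\|_1^2$ in the energy estimate, bound $\|w_s\|_\alpha\le\|w_s\|_1$ (valid for $\alpha\le1$), absorb the cross terms into a fraction of $-\|w_s\|_1^2$ via Young, and only at the very end convert the remaining $-\tfrac12\|w_s\|_1^2$ to $-\tfrac12\lambda_*\|w_s\|^2$ via Poincar\'e. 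With that reordering, your argument closes and reproduces the paper's constants in (\ref{e:condition2})--(\ref{e:condition3}); as written, the absorption step is not valid.
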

In the proof we will see that
 the $4\nu$ is  not optimal in (\ref{e:condition3}),
but for simplicity of proof, we keep the $4\nu$.
\begin{proof}
Define
$$ p=v_c-\bar{v}_c
 \quad\mathrm{and}\quad
q=v_s-\bar{v}_s\,,
$$
then
$$
\partial_t p= \nu p+zp+e^{-z}P_cB^{(R)}(ve^z)-e^{-z}P_cB^{(R)}(\bar{v}e^z)
$$
$$
\partial_t q= -L_sq+\nu q+zq+e^{-z}P_sB^{(R)}(ve^z)-e^{-z}P_sB^{(R)}(\bar{v}e^z)\,.
$$
By the property of operator $L$ and the Lipschitz property of $B^{(R)}$ we
obtain for some positive constant $c_1$ depending on $\lambda_*$ and
$\alpha$
\begin{equation}\label{e:p}
\frac{1}{2}\frac{d}{dt}\|p\|^2
\geq \nu\|p\|^2+z\|p\|^2- L_R\|p\|^2-L_R\|p\|\|q\|
 \end{equation}
and
\begin{equation}\label{e:q}
\frac{1}{2}\frac{d}{dt}\|q\|^2\leq - \|q\|_1^2+\nu\|q\|^2+z\|q\|^2
+L_R\|q\|\|q\|_\alpha+ L_R\|p\|\|q\|_\alpha\;.
\end{equation}
Thus
 \begin{eqnarray*}
\lefteqn{\frac{1}{2}\frac{d}{dt}(\|q\|^2-\delta^2\|p\|^2)} \\
&\leq& - \|q\|_1^2+\nu\|q\|^2+z\|q\|^2+ L_R\|q\|\|q\|_\alpha+L_R\|p\|\|q\|_\alpha \\
&&-\nu\delta^2\|p\|^2-z\delta^2\|p\|^2+\delta^2 L_R\|p\|^2+\delta^2 L_R\|p\|\|q\|
\;. \\
\end{eqnarray*}
Now suppose that $v-\bar{v}\in {\cal K}_\delta $ (i.e. $\delta \|p\|=\|q\|$) for some $t$.
Then
\begin{eqnarray*}
\lefteqn{\frac{1}{2}\frac{d}{dt}(\|q\|^2-\delta^2\|p\|^2)}\\
&\leq&
-\|q\|_1^2+L_R\|q\|\|q\|_\alpha+L_R\|p\|\|q\|_\alpha+\delta^2
L_R\|p\|^2+\delta^2 L_R\|p\|\|q\|\\
&\leq &
-\|q\|_1^2+L_R(\|p\|+\|q\|)\|q\|_1+\delta^2
L_R\|p\|^2+\delta^2 L_R\|p\|\|q\| \\
&\leq &
- \|q\|_1^2+ L_R(1+\tfrac1\delta) \|q\| \|q\|_1+ (1+\delta)L_R\|q\|^2 \\
&\leq &
- \tfrac12 \|q\|_1^2+  \Big( \tfrac12L_R^2(1+\tfrac1\delta)^2 + (1+\delta)L_R \Big) \|q\|^2 \\
\end{eqnarray*}
where we used Young inequality in the last step.

By (\ref{e:condition2}) we obtain using Poincare inequality
$$
\frac{d}{dt}(\|q\|^2-\delta^2\|p\|^2) \leq - \tfrac12 \lambda_* \|q\|^2 \;,
$$
which yields the desired cone invariance.

For the second claim consider now that if $p+q$ is outside the cone
at time $t_0$ ( i.e., $\|q(t_0)\|>\delta \|p(t_0)\|$)\,. Then by the
first result we have $\|q(t)\|>\delta \|p(t)\|$ for $t\in [0, t_0]$.
Then by (\ref{e:q}) we derive
\begin{eqnarray*}
 \frac{1}{2}\frac{d}{dt}\|q\|^2
&\leq&-\|q\|^2_1 +(\nu+z)\|q\|^2 +L_R \|q\|\|q\|_1+L_R\|p\|\|q\|_1\\
&\leq&-\|q\|^2_1 +(\nu+z)\|q\|^2 +L_R(1+\tfrac1\delta) \|q\|\|q\|_1\\
&\leq&-\tfrac12\|q\|^2_1 +\Big(\nu+z  +\tfrac12L_R^2(1+\tfrac1\delta)^2 \Big)\|q\|^2\\
\end{eqnarray*}
By (\ref{e:condition3})
$$\frac{d}{dt}\|q(t, \omega)\|^2 \leq (-\tfrac12\lambda_*+2z(\theta_t\omega)) \|q(t, \omega)\|^2\;.
$$
Then a  comparison principle  yields for almost all $\omega$
$$\|q(t, \omega)\|^2
\le \|q(0,\omega)\|^2\exp\left\{-\tfrac12\lambda_*t+2\int_0^t
z(\theta_{\tau}\omega)d\tau \right\}\,.
$$
Finally, $\|q(0,\omega)\|=\|v_s(0,\omega)-\bar v_s(0,\omega)\|\leq
\|u_0-\bar u_0\|e^{z(\omega)}$ yields the result.
\end{proof}

To finish the proof of Theorem \ref{thm:exRIM}, we also need the
following lemma.
\begin{lemma}\label{lem:backward sol}
For any given $T>0$, the following initial value problem
\begin{eqnarray*}
\dot{v}_c &=& \nu v_c+z v_c+
P_ce^{-z}B^{(R)}((v_c+v_s)e^z),
\;\; v_c(T)=\xi\in H_c \\
\dot{v}_s&=&(-L_s+\nu+z)v_s+P_se^{-z}B^{(R)}((v_c+v_s)e^z),\;\;
v_s(0)=h^s(v_c(0))
\end{eqnarray*}
has a unique solution $(v_c(t, \omega), v_s(t, \omega))\in C(0, T;
H_c\times H_s)$ which lies on the manifold
$\mathcal{M}^R_{cut}(\theta_t\omega)$ a.a..
\end{lemma}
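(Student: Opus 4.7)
The plan is to exploit the invariance of $\mathcal{M}^R_{cut}(\omega)$ to decouple the two-point problem into a backward Cauchy problem in the finite-dimensional space $H_c$. Namely, any trajectory on the manifold must satisfy $v_s(t)=h(\theta_t\omega, v_c(t))$ for all $t$, so I would substitute this ansatz into the equation for $v_c$ to obtain the reduced ODE
\begin{equation*}
\dot v_c = \nu v_c + z(\theta_t\omega)\, v_c + P_c e^{-z(\theta_t\omega)}
B^{(R)}\bigl((v_c+h(\theta_t\omega,v_c))\,e^{z(\theta_t\omega)}\bigr),\quad v_c(T)=\xi.
\end{equation*}

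First I would check that the right-hand side is Lipschitz in $v_c$. The Lipschitz constant of $h(\omega,\cdot)$ is controlled by the contraction estimate leading to Theorem \ref{thm:fixpoint}, and the cut-off ensures the global Lipschitz bound \eqref{e:Lipcut} for $B^{(R)}$. Composing with the projection $P_c$ onto the finite-dimensional $H_c$ (using $\|P_c w\|\le C_\alpha\|w\|_{-\alpha}$), the drift is Lipschitz in $v_c$ with a constant that depends on $\omega$ only through $z(\theta_\cdot\omega)$, which is continuous and hence bounded on $[0,T]$ for a.e.\ $\omega$. Standard ODE theory (Picard--Lindelöf applied backward from $t=T$) then yields a unique solution $v_c\in C([0,T];H_c)$ on the full interval.

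Second, I would set $v_s(t):=h(\theta_t\omega, v_c(t))$ and verify that this pair solves the original coupled system. Here I use the invariance of $\mathcal{M}^R_{cut}(\omega)$ under the transformed random flow: since the graph of $h$ is positively invariant and any trajectory starting on it stays on it, the second equation, when restricted to the manifold, is automatically satisfied whenever the first is. At $t=0$ one has $v_s(0)=h(\omega,v_c(0))$, which is the required boundary condition. Uniqueness of $(v_c,v_s)$ follows from uniqueness of the reduced ODE together with the graph property of the manifold, which forces any solution lying on $\mathcal{M}^R_{cut}(\theta_t\omega)$ to satisfy the $h$-relation.

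The main obstacle I anticipate is a careful justification that the solution produced this way actually lies on $\mathcal{M}^R_{cut}(\theta_t\omega)$ for every $t\in[0,T]$, and not merely at $t=0$. This requires interpreting the positive invariance of the manifold in the backward direction: since $v_c(t)$ is obtained from $v_c(T)=\xi$ by solving backward, one must argue that the forward flow starting from $(v_c(0),h(\omega,v_c(0)))\in\mathcal{M}^R_{cut}(\omega)$ reproduces $v_c(t)$ at time $t$, and hence lies on $\mathcal{M}^R_{cut}(\theta_t\omega)$ by invariance. This follows from uniqueness of the forward Cauchy problem for the reduced equation, with initial datum $v_c(0)$, which matches the trajectory constructed backward.
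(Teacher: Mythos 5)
Your proposal is correct, but it follows a genuinely different route from the paper. The paper's proof (referencing Lemmas 3.3 and 3.8 of Duan--Lu--Schmalfu{\ss}) sets up the two-point boundary value problem directly as a fixed-point equation for a ``solving operator'' on a space of $H$-valued paths on $[0,T]$, and establishes contraction exactly as for the Lyapunov--Perron operator $\mathcal{T}$ in (\ref{def:T}); the paper even flags the delicate point that $B^{(R)}$ is Lipschitz only from $H$ into $H^{-\alpha}$, not into $H$, so the contraction estimate needs the smoothing of $e^{-L_s(t-\tau)}$. You instead substitute the graph relation $v_s(t)=h(\theta_t\omega,v_c(t))$ into the $v_c$-equation, obtain a finite-dimensional reduced ODE on $H_c$, solve it backward from $t=T$ by Picard--Lindel\"of (globally, since $B^{(R)}$ and $h$ are globally Lipschitz and $z(\theta_\cdot\omega)$ is bounded on $[0,T]$), and then invoke positive invariance of $\mathcal{M}^R_{cut}$ plus backward-forward uniqueness of the reduced ODE to recover the full pair and show it lies on the manifold. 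Your route sidesteps the regularity issue entirely because $P_c$ maps $H^{-\alpha}$ continuously onto the finite-dimensional $H_c$, so the reduced drift is Lipschitz in the ordinary sense. The trade-off is that your argument relies as an \emph{input} on the already-established invariance of $\mathcal{M}^R_{cut}$ and on a uniform Lipschitz bound for $h(\omega,\cdot)$ (which indeed follows from the contraction constant of $\mathcal{T}$, hence is $\omega$-independent), whereas the paper's fixed-point construction proves existence, uniqueness, and the graph property simultaneously without presupposing them. Both approaches are sound given what the paper has already proved at this point; yours is arguably the cleaner reading once $\mathcal{M}^R_{cut}$ is in hand, since it reduces the infinite-dimensional two-point problem to a classical finite-dimensional backward Cauchy problem.
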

\begin{proof}
This proof is same as Lemma 3.3 and Lemma 3.8 of \cite{DLSch03}
except that one should be careful of the nonlinearity is not
Lipschitz on space $H$ in proving the contraction property of the
solving operator. This step is same as the proof of contraction
property of operator $\mathcal{T}$ defined in (\ref{def:T}).
\end{proof}

Now following the contradicting discussion  in \cite{WD07} we verify
the following attracting property of the random invariant manifold.

\begin{theorem}\label{thm:reduce}
 Assume (\ref{e:condition1}),
(\ref{e:condition2}) and  (\ref{e:condition3}).
 For any solution $u(t, \omega)$ of cut-off
system (\ref{e:cut-off}), there is one orbit $U(t,\omega)$ on
$\mathcal{M}_{cut}^R(\theta_t\omega)$ with $P_cU(t,\omega)$ solves
the following equation
 $$
 \partial_t u_c=\nu u_c+P_cB^{(R)}(u_c+e^{z(\theta_t\omega)}h(\theta_t\omega, e^{-z(\theta_t\omega)}u_c))+\sigma u_c\circ\dot{W}(t)
$$
such that
$$ \|u(t, \omega)-U(t, \omega)\|\leq
D(t, \omega)\|u(0,\omega)-U(0,\omega)\|e^{-\lambda_*t}
 $$
where $D(t, \omega)$  is a tempered increasing  process defined by
 \begin{equation} \label{e:D}
D(t, \omega)= e^{z(\theta_t\omega)+\int_0^t
z(\theta_\tau\omega)\,d\tau}.
 \end{equation}
\end{theorem}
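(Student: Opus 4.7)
The approach is the shadowing argument of \cite{WD07}, based on a contradiction with cone invariance. Given a solution $u(\cdot,\omega)$ of the cut-off system (\ref{e:cut-off}), I would construct the tracking orbit $U$ as the $T\to\infty$ limit of a family of candidate orbits produced by the backward Cauchy problem of Lemma \ref{lem:backward sol}. Specifically, for each $T>0$ let $U^T(\cdot,\omega)$ be the unique orbit on $\mathcal{M}^R_{cut}(\theta_\cdot\omega)$ satisfying the terminal condition $P_c U^T(T,\omega)=P_c u(T,\omega)$; existence is exactly the content of that lemma.

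At $t=T$, the difference $u(T,\omega)-U^T(T,\omega)$ has vanishing $H_c$-component and is therefore automatically outside the cone $\mathcal{K}_\delta$. I claim this persists backwards: were the difference in $\mathcal{K}_\delta$ at some $t_0\in[0,T)$, the forward cone-invariance part of Lemma \ref{lem:cone} would keep it inside $\mathcal{K}_\delta$ for all $t\ge t_0$, contradicting the situation at $t=T$. Hence $u(t,\omega)-U^T(t,\omega)\notin\mathcal{K}_\delta$ throughout $[0,T]$, and the second part of Lemma \ref{lem:cone}, applied to $v=u\,e^{-z}$ and $\bar v=U^T e^{-z}$ with $t_0=T$, together with the transformation $u=v\,e^z$, yields
\begin{equation*}
\|u_s(t,\omega)-U^T_s(t,\omega)\|\le C\,\|u(0,\omega)-U^T(0,\omega)\|\,D(t,\omega)\,e^{-\lambda_* t}
\end{equation*}
on $[0,T]$. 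Combined with the cone exclusion $\|u_c(t,\omega)-U^T_c(t,\omega)\|\le\delta^{-1}\|u_s(t,\omega)-U^T_s(t,\omega)\|$, this already produces the desired full-norm tracking estimate for each $U^T$.

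Next I would pass to the limit $T\to\infty$. The cone exclusion at $t=0$ controls $\|P_c U^T(0,\omega)\|$ uniformly in $T$, and the finite-dimensionality of $H_c$ from Assumption $A_2$ yields a subsequence $T_k\to\infty$ with $P_c U^{T_k}(0,\omega)\to\xi_\infty$. Continuity of the graph map $\psi(\omega,\cdot)$ from (\ref{e:LRIM}) promotes this to $U^{T_k}(0,\omega)\to U_0\in\mathcal{M}^R_{cut}(\omega)$. Setting $U(t,\omega):=\varphi^R(t,\omega)U_0$, invariance of the manifold gives $U(t,\omega)\in\mathcal{M}^R_{cut}(\theta_t\omega)$, continuous dependence of the cut-off flow (valid because $B^{(R)}$ is globally Lipschitz, cf. (\ref{e:Lipcut})) permits passing to the limit in the tracking bound, and substituting the graph identity $P_s U(t,\omega)=e^{z(\theta_t\omega)}h(\theta_t\omega,e^{-z(\theta_t\omega)}P_c U(t,\omega))$ into the $P_c$-projection of (\ref{e:cut-off}) produces the reduced Stratonovich equation for $P_c U$ stated in the theorem.

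I expect the main technical obstacle to be the uniform-in-$T$ bound needed for the compactness step: because the stationary coefficient $z(\theta_t\omega)$ is only tempered, running the cone exclusion backwards from $t=T$ to $t=0$ introduces factors involving $\int_0^T z(\theta_\tau\omega)\,d\tau$, and one must check that the resulting estimate on $U^T(0,\omega)$ does not degenerate as $T\to\infty$. The spectral-gap-type conditions (\ref{e:condition2}) and (\ref{e:condition3}) underpinning Lemma \ref{lem:cone} are precisely what make the relevant exponential factors absorb the tempered growth, ensuring that the limit $U_0$ exists and that the passage to the limit in the tracking estimate is valid.
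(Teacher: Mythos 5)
Your plan reproduces the shadowing-via-cone-invariance argument from \cite{WD07} that the paper itself invokes (it gives no independent proof of Theorem~\ref{thm:reduce}, only the reference): backward orbits $U^T$ pinned by $P_cU^T(T)=P_cu(T)$ via Lemma~\ref{lem:backward sol}, cone exclusion on $[0,T]$ by contradiction with the forward part of Lemma~\ref{lem:cone}, the decay bound (\ref{e:bou_q}) transported back through $u=ve^{z}$, then a compactness limit $T\to\infty$ using finite-dimensionality of $H_c$ and the graph map. This is exactly the intended route.

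One clarification on the step you flag as the main obstacle: the uniform-in-$T$ bound on $U^T(0,\omega)$ is not threatened by the tempered integral $\int_0^T z\,d\tau$, which only enters the forward decay estimate and is harmless at $t=0$. The real mechanism is elementary: cone exclusion at $t=0$ gives $\|u_c(0)-U^T_c(0)\|\le\delta^{-1}\|u_s(0)-U^T_s(0)\|$, and since $U^T(0)$ lies on the graph of $\psi(\omega,\cdot)$ with $\psi(\omega,0)=0$ and $\mathrm{Lip}\,\psi=\mathrm{Lip}\,h$, one gets
$\bigl(1-\delta^{-1}\mathrm{Lip}\,h\bigr)\|U^T_c(0)\|\le\|u_c(0)\|+\delta^{-1}\|u_s(0)\|$,
which closes once $\mathrm{Lip}\,h<\delta$, a smallness condition on $R$ already available from the contraction bound underlying (\ref{e:condition1}). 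With that replacement your argument is complete.
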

Then Theorem \ref{thm:exRIM} is a direct result of Theorem
\ref{thm:reduce}. Moreover we have the following result.
\begin{corollary} \label{cor:exp}
There is a LRIM $\mathcal{M}(\omega)$ for system (\ref{e:main}) in a
small ball $B(0, R)$ with $R$ and $\nu$ satisfy
(\ref{e:condition1}), (\ref{e:condition2}), and
(\ref{e:condition3}).  Moreover  for $\|u_0\|<R$
 $$
\text{dist}(\varphi(t, \omega)u_0 , {\cal M}(\omega)) \le
2RD(t,\omega)e^{-\lambda_*t}
 $$
for all $t\in[0,\tau_0)$ where
$\tau_0=\inf\{t>0\ :\ \varphi(t, \omega)u_0\not\in B_r(0)\}$.
\end{corollary}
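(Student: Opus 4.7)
The existence half of the corollary is already in hand from the earlier development: under (\ref{e:condition1})--(\ref{e:condition3}), Theorem \ref{thm:fixpoint} delivers the graph map $h(\omega,\cdot)$, and (\ref{e:LRIM}) defines the LRIM as $\mathcal{M}^R(\omega)=\mathcal{M}_{cut}^R(\omega)\cap B_R(0)$. What still needs proof is the attracting estimate, and I plan to obtain it by localising Theorem \ref{thm:reduce} to $B_R(0)$.

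The first observation is that, by Definition \ref{def:cutoff}, $B^{(R)}\equiv B$ on $B_R(0)$, so the cut-off equation and the original one generate identical trajectories as long as the orbit has not escaped $B_R(0)$. Thus, for $\|u_0\|<R$ and every $t\in[0,\tau_0)$ one has $\varphi(t,\omega)u_0=\varphi^R(t,\omega)u_0=:u^R(t,\omega)$. Applying Theorem \ref{thm:reduce} to $u^R$ then yields an orbit $U(t,\omega)$ on $\mathcal{M}_{cut}^R(\theta_t\omega)$ satisfying
$$\|u^R(t,\omega)-U(t,\omega)\|\le D(t,\omega)\,\|u_0-U(0,\omega)\|\,e^{-\lambda_* t},$$
and, reading $\mathcal{M}$ in the claim as the globally defined manifold $\mathcal{M}_{cut}^R$ (consistent with the convention announced after (\ref{e:LRIM})), this bound transfers to $\mathrm{dist}(\varphi(t,\omega)u_0,\mathcal{M}(\theta_t\omega))$.

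It remains to bound $\|u_0-U(0,\omega)\|$ by $2R$. The orbit $U$ produced by the proof of Theorem \ref{thm:reduce} is constructed by a backward-in-time Lyapunov--Perron procedure that fixes $P_cU(0,\omega)=P_cu_0$, so $U(0,\omega)=P_cu_0+\psi(\omega,P_cu_0)$ with $\psi(\omega,\xi)=e^{z(\omega)}h(\omega,e^{-z(\omega)}\xi)$. Because $\psi(\omega,0)=0$ (as $h(\omega,0)=0$, the origin being a fixed point of (\ref{e:cut-off})) and the cone-invariance parameter $\delta$ in Lemma \ref{lem:cone} may be chosen equal to one under (\ref{e:condition2}), the graph map $\psi(\omega,\cdot)$ has Lipschitz constant at most one. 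Hence $\|\psi(\omega,P_cu_0)\|\le\|P_cu_0\|<R$, giving
$$\|u_0-U(0,\omega)\|=\|P_su_0-\psi(\omega,P_cu_0)\|\le\|P_su_0\|+\|\psi(\omega,P_cu_0)\|\le 2R,$$
which closes the argument. The only step requiring genuine care is the identification $P_cU(0,\omega)=P_cu_0$ together with the Lipschitz-one bound on $\psi$; both are standard outputs of the backward Lyapunov--Perron/cone construction used in the proof of Theorem \ref{thm:reduce}, but they are the only non-bookkeeping ingredient.
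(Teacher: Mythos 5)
Your overall plan — localise Theorem \ref{thm:reduce} to $B_R(0)$ using $B^{(R)}=B$ there, then bound $\|u_0-U(0,\omega)\|$ by $2R$ — is the right shape of argument, and the paper itself gives no explicit proof of this corollary (it simply declares it a consequence of Theorem \ref{thm:reduce}). But the step you yourself flag as the only one requiring care, the identification $P_cU(0,\omega)=P_cu_0$, is where the argument breaks. The tracking orbit $U$ of Theorem \ref{thm:reduce} is obtained, via Lemma \ref{lem:backward sol} and the limiting procedure of \cite{WD07}, by solving a two-point boundary value problem whose data are $v_c(T)=\xi$ (a prescribed $H_c$-value at the \emph{terminal} time $T$) and $v_s(0)=h^s(v_c(0))$ (the manifold constraint at time $0$), and then letting $T\to\infty$. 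The $H_c$-component at time $0$ is an \emph{output} of this construction, determined by the flow, not a prescribed datum; the approximants satisfy $P_cU^T(T)=P_cu(T)$, and there is no reason the limit should satisfy $P_cU(0)=P_cu_0$. Choosing instead the orbit through $P_cu_0+\psi(\omega,P_cu_0)$ would make your $2R$ estimate immediate, but that orbit is not the one for which Theorem \ref{thm:reduce} establishes the exponential decay: with that initial datum $u-U$ starts outside the cone $\mathcal{K}_\delta$, and Lemma \ref{lem:cone} does not prevent it from entering $\mathcal{K}_\delta$ later, so the decay estimate does not transfer.

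Thus the proposal has a genuine gap: the $2R$ bound on $\|u_0-U(0,\omega)\|$ is not justified, because it rests on a property of $U$ that the backward Lyapunov--Perron construction does not deliver. To close it you would need either to show that the limiting orbit $U(0)$ lies in a ball of radius $\mathcal{O}(R)$ (e.g.\ by tracking $\|U^T(0)\|$ through the $T\to\infty$ limit using the cone estimates and the Lipschitz bound on $\psi$), or to change the choice of $U$ and re-derive the decay for it; neither is a routine bookkeeping step. The remaining parts of your argument (existence from Theorem \ref{thm:fixpoint} and \eqref{e:LRIM}; $\psi(\omega,0)=0$ from $v^*(\cdot,0)=0$; the Lipschitz bound on $\psi$ for small $R$) are sound.
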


%
\section{Shape of the Invariant Manifold}\label{sec:shapeIM}
%

In this section we show that the LRIM  $\mathcal{M}^R_{cut}$\,, see
(\ref{e:RIM}), which denotes the random invariant manifold of
equation (\ref{e:cut-off}) is locally  quadratic. That is the
Lipshitz-function $h$ (cf. Theorem \ref{thm:fixpoint}) describing
the manifold is quadratic near the fixed point  $0$. Suppose
$\sigma$ and $\nu$ are small and choose the cut-off $R$ also small.
Consider first  the LRIM of the transformed equation (\ref{e:rand
eq}). We saw in the previous Section that it is given by
 $$
\tilde{\mathcal{M}}^R_{cut}(\omega)=\text{Graph}(h)
=\{\left(\xi,h(\omega,\xi)\right):\ \xi \in H_c\}\,,
 $$
where
$$
h(\omega,\xi)
= P_s v^*(0,\omega,\xi)
=v^*_s(0,\omega,\xi)
$$
with $v^*$ the unique  fixed point of the
contraction $\mathcal{T}$ in the space $C^-_\eta$
(i.e., $v^*=\mathcal{T}(v^*)$, cf. Theorem \ref{thm:fixpoint}).
Thus for any $\xi\in H_c$ and $t\le0$
\begin{eqnarray}\label{e:vstar}
v^*(t)&=& e^{\nu t+\int_0^t z(s)ds}\xi
+\int_0^te^{\nu(t-\tau)+\int_\tau^t z(r)dr}e^{-z(\tau)}B^{(R)}_c(e^{z(\tau)}v^*(\tau)) d\tau \nonumber\\
&&+\int^t_{-\infty}e^{(-L_s+\nu)(t-\tau)+\int_\tau^tz(r)dr}
e^{-z(\tau)}B^{(R)}_s(e^{z(\tau)}v^*(\tau))d\tau\,.
 \end{eqnarray}
It was also established in the previous section, that
the local invariant manifold for the original equation  (\ref{e:main})
is given by (cf. (\ref{e:LRIM}))
 $$
 \mathcal{M}^R(\omega)=\{(\xi,e^{z(\omega)}h(\omega,e^{-z(\omega)}\xi)):\
\xi \in H_c\}\cap B_R(0)\,.
 $$
In order to describe the shape of the LRIM,
 we first establish two bounds on $v^*$ in $C^-_\eta$.
\begin{lemma}
\label{lem:som1}
Under Assumptions $A_1-A_3$ suppose $\lambda_* > \nu + \eta > 0$.
Then there is a constant $C>0$ such that for all  $\xi\in H_c$
$$
\|v^*\|_{C^-_\eta} \le C \|\xi\|
\quad\text{and}\quad
\|v^*_s\|_{C^-_\eta} \le C R \|\xi\|\;.
$$
\end{lemma}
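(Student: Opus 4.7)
The plan is to exploit the fixed-point identity $v^*=\mathcal{T}(v^*)$ together with the Lipschitz/contraction estimate for $\mathcal{T}$ that was already established in the paragraph preceding (\ref{e:condition1}). No new analytical input is needed beyond carefully tracking the random exponential weights.

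For the first bound I would write $v^* = \mathcal{T}(0) + [\mathcal{T}(v^*)-\mathcal{T}(0)]$. Since $B^{(R)}(0)=0$, the two integral terms in Definition \ref{def:operator} vanish and $\mathcal{T}(0)(t) = e^{\nu t+\int_0^t z(s)\,ds}\xi$. Its $C^-_\eta$-norm equals
\begin{equation*}
\sup_{t\le0} e^{\eta t-\int_0^t z\,d\tau}\cdot e^{\nu t+\int_0^t z\,ds}\|\xi\|
= \sup_{t\le0} e^{(\nu+\eta)t}\|\xi\| = \|\xi\|,
\end{equation*}
using $\nu+\eta>0$. Denoting by $\kappa<1$ the Lipschitz constant of $\mathcal{T}$ (which is $<1$ by the smallness assumption (\ref{e:condition1}) in force for Theorem \ref{thm:fixpoint}), the triangle inequality gives $\|v^*\|_{C^-_\eta}\le\|\xi\|+\kappa\|v^*\|_{C^-_\eta}$ and hence $\|v^*\|_{C^-_\eta}\le(1-\kappa)^{-1}\|\xi\|=:C\|\xi\|$.

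For the stable part, the key observation is that the first two summands in (\ref{def:T}) lie in $H_c$, so $v^*_s(t)$ is exactly the third integral. I would estimate it pathwise: using the smoothing bound (\ref{e:SG}) for the semigroup generated by $L_s$, together with $\|B^{(R)}(u)\|_{-\alpha} = \|B^{(R)}(u)-B^{(R)}(0)\|_{-\alpha}\le L_R\|u\|$ from (\ref{e:Lipcut}), one obtains
\begin{equation*}
\|v^*_s(t)\|
\le L_R M_{\alpha,\lambda}\int_{-\infty}^t
e^{(\nu-\lambda)(t-\tau)}(t-\tau)^{-\alpha}\,
e^{\int_\tau^t z(r)\,dr}\,\|v^*(\tau)\|\,d\tau.
\end{equation*}
Multiplying by the weight $e^{\eta t-\int_0^t z\,d\tau}$, the random exponentials collapse via $\int_\tau^t z\,dr-\int_0^t z\,d\tau = -\int_0^\tau z\,dr$, so the integrand factors as $e^{(\nu-\lambda+\eta)(t-\tau)}(t-\tau)^{-\alpha}\cdot e^{\eta\tau-\int_0^\tau z\,dr}\|v^*(\tau)\|$. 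Bounding the last product by $\|v^*\|_{C^-_\eta}$ and evaluating the deterministic Gamma integral yields
\begin{equation*}
\|v^*_s\|_{C^-_\eta}
\le L_R M_{\alpha,\lambda}\,\frac{\Gamma(1-\alpha)}{(\lambda-\nu-\eta)^{1-\alpha}}\,\|v^*\|_{C^-_\eta}
\le CR\|\xi\|,
\end{equation*}
where the final inequality uses the first bound together with $L_R=2RC_B$.

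The only real thing to watch is the bookkeeping of the random exponentials, in particular checking that the factors $e^{\int_\tau^t z\,dr}$, $e^{-z(\tau)}$, $e^{z(\tau)}$ and $e^{\eta t-\int_0^t z\,d\tau}$ recombine cleanly into a deterministic rate $e^{(\nu-\lambda+\eta)(t-\tau)}$ multiplied by $e^{\eta\tau-\int_0^\tau z\,dr}$. This is the same cancellation already used to show that $\mathcal{T}$ preserves $C^-_\eta$, so the argument reduces essentially to a triangle inequality plus contraction.
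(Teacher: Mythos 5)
Your proof is correct and follows essentially the same route as the paper: for the first bound, compute $\|\mathcal{T}(0)\|_{C^-_\eta}=\|\xi\|$ and use the contraction property of $\mathcal{T}$; for the second bound, use that $v_s^*$ is given exactly by the third integral of the fixed-point equation, together with the smoothing estimate (\ref{e:SG}) and the bound $\|B^{(R)}_s(u)\|_{-\alpha}\le CR\|u\|$, then absorb the weighted $\|v^*(\tau)\|$ into $\|v^*\|_{C^-_\eta}$ and evaluate the resulting Gamma-integral using $\lambda-\nu-\eta>0$. The only cosmetic difference is that you apply the $C^-_\eta$-weight to the integral inequality before factoring the integrand, while the paper first pulls out $e^{\int_0^t z\,dr}$ and $e^{-\eta\tau}$ and then recognizes the weighted supremum; the bookkeeping is equivalent.
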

\begin{proof}
For the first bound note that $\mathcal{T}(0)=e^{\nu t+\int_0^t z(s)ds}\xi$.
Thus
$$
\|\mathcal{T}(0)\|_{C^-_\eta}
= \sup_{t\le0} e^{(\nu+\eta)t}\|\xi\|
= \|\xi\|\;.
$$
Now  as $\mathcal{T}$ is a contraction
$$
 \|v^*\|_{C^-_\eta}
\le  \|\mathcal{T}(v^*)-\mathcal{T}(0)\|_{C^-_\eta}+ \|\mathcal{T}(0)\|_{C^-_\eta}
\le  \text{Lip}(\mathcal{T})  \|v^*\|_{C^-_\eta} + \|\xi\| \;.
$$
We obtain the first claim with $C=1/(1- \text{Lip}(\mathcal{T}) ))$.

For the second claim   we bound (\ref{e:vstar}) by using the
semigroup estimate from (\ref{e:SG}) together with the fact that
\begin{equation}\label{e:BRs}
\|B^{(R)}_s (v e^z) \|_{-\alpha} \le  C R\|v\|e^z\,,
\end{equation}
for $v \in H$, which follows immediately from Definition
\ref{def:cutoff} and Assumption A$_3$\,. Thus
\begin{eqnarray}
\lefteqn{\|v^*_s(t) \| = \|P_sv^*(t) \|}  \label{e:bvs} \\
&=&
\Big\| \int^t_{-\infty} e^{(-L_s+\nu)(t-\tau)+\int_\tau^tz(r)dr}
e^{-z(\tau)}B^{(R)}_s(e^{z(\tau)}v^*(\tau))d\tau\Big\|
\nonumber \\
&\le &
C \int^t_{-\infty}  (t-\tau)^{-\alpha}e^{-(\lambda-\nu) (t-\tau) +\int_\tau^tz(r)dr}
 R \| v^*(\tau)\| d\tau
\nonumber \\
&\le &
C R e^{\int_0^tz(r)dr}
\int^t_{-\infty}  (t-\tau)^{-\alpha}e^{-(\lambda-\nu) (t-\tau) } e^{-\eta\tau} d\tau   \| v^*\|_{C^-_\eta}\;.
\nonumber
\end{eqnarray}
Then using the first claim yields
$$
 \|v^*_s\|_{C^-_\eta}
\le C R \| \xi \|  \int^0_{-\infty}
|\tau|^{-\alpha}e^{-(\lambda-\nu-\eta) |\tau| }d\tau\,.
$$
Choosing $\lambda>\nu+\eta$ finishes the proof.
\end{proof}
Now we prove the first reduction step, which removes
the explicit dependence of $h$ on $v_s^*$.
Define  for $t\le0$ the cut-off
$\chi_R^*(t) = \chi_R(v^*(t)e^{z(t)})$
and the approximation
\begin{equation}
\label{e:defg1}
 g_1^*(t)=\int^t_{-\infty} e^{(-L_s+\nu)(t-\tau)+\int_\tau^tz(r)dr}
\chi_R^*(\tau)B_s(v^*_c(\tau),v^*_c(\tau))e^{z(\tau)}d\tau\,.
\end{equation}
\begin{lemma}\label{lem:som2}
Under Assumptions $A_1-A_3$ suppose  $\lambda_*>\eta+\nu>0$, then
$$
\|v_s^*-g_1^*\|_{C^-_\eta}
\le C R^2 \|\xi\|
\quad \text{ for all }
\xi\in H_c\;.
$$
\end{lemma}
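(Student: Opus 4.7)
The plan is to write $v_s^* - g_1^*$ as a single integral and control its integrand using two different estimates on the two factors of the bilinear form: the cut-off for one factor and the $C^-_\eta$ bound on $v_s^*$ (from Lemma \ref{lem:som1}) for the other. The $R^2$ saving, compared with the naive $R\|\xi\|^2$ one would get by using only the norm bounds, comes precisely from pairing the cut-off estimate with the smallness of $v_s^*$.

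First I would observe that since $B^{(R)}_s(e^{z(\tau)}v^*(\tau))=\chi_R^*(\tau)e^{2z(\tau)}B_s(v^*,v^*)$, the factor $e^{-z(\tau)}$ in front of $B^{(R)}_s$ in \eqref{e:vstar} combines with the $e^{2z(\tau)}$ to give exactly the $\chi_R^*(\tau)e^{z(\tau)}B_s(v^*,v^*)$ structure appearing in \eqref{e:defg1}. Writing $v^*=v_c^*+v_s^*$ and using symmetry/bilinearity of $B$,
\[
B_s(v^*,v^*)-B_s(v_c^*,v_c^*)=2B_s(v_c^*,v_s^*)+B_s(v_s^*,v_s^*).
\]
Thus
\[
v_s^*(t)-g_1^*(t)=\int_{-\infty}^t e^{(-L_s+\nu)(t-\tau)+\int_\tau^t z(r)dr}\chi_R^*(\tau)e^{z(\tau)}\bigl[2B_s(v_c^*,v_s^*)+B_s(v_s^*,v_s^*)\bigr]d\tau.
\]

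Next I would estimate the $H^{-\alpha}$-norm of the integrand. In each of the two terms one factor is paired with the cut-off: on $\mathrm{supp}\,\chi_R^*$ we have $\|v^*(\tau)e^{z(\tau)}\|\le 2R$, hence $\chi_R^*(\tau)\|v_c^*(\tau)\|\le 2Re^{-z(\tau)}$ and similarly for $v_s^*$. The other factor I would bound by the $C^-_\eta$ estimate $\|v_s^*(\tau)\|\le\|v_s^*\|_{C^-_\eta}e^{-\eta\tau+\int_0^\tau z(r)dr}\le CR\|\xi\|\,e^{-\eta\tau+\int_0^\tau z(r)dr}$ from Lemma \ref{lem:som1}. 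Together with $\|B_s(\cdot,\cdot)\|_{-\alpha}\le C_B\|\cdot\|\|\cdot\|$, this yields
\[
\chi_R^*(\tau)e^{z(\tau)}\bigl\|2B_s(v_c^*,v_s^*)+B_s(v_s^*,v_s^*)\bigr\|_{-\alpha}\le CR^2\|\xi\|\,e^{-\eta\tau+\int_0^\tau z(r)dr},
\]
since the $e^{z(\tau)}$ factor is absorbed by the $e^{-z(\tau)}$ produced by the cut-off estimate on the first factor.

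Finally I would insert the semigroup bound \eqref{e:SG} for $e^{-L_s(t-\tau)}$ and collect the exponentials. Because $\int_\tau^t z+\int_0^\tau z=\int_0^t z$, I obtain
\[
\|v_s^*(t)-g_1^*(t)\|\le CR^2\|\xi\|\,e^{\int_0^t z(r)dr}\int_{-\infty}^t(t-\tau)^{-\alpha}e^{-(\lambda-\nu)(t-\tau)}e^{-\eta\tau}d\tau,
\]
and the substitution $s=t-\tau$ converts the integral into $e^{-\eta t}\Gamma(1-\alpha)/(\lambda-\nu-\eta)^{1-\alpha}$, which is finite because $\lambda>\nu+\eta$ (using $\lambda_*>\nu+\eta$ and $\lambda<\lambda_*$ sufficiently close). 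Multiplying by $e^{\eta t-\int_0^t z(r)dr}$ and taking the supremum over $t\le0$ gives the claimed bound $\|v_s^*-g_1^*\|_{C^-_\eta}\le CR^2\|\xi\|$.

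The only delicate point, and the main thing to get right, is bookkeeping of the exponential weights: one must be careful to use the cut-off on exactly one factor of each quadratic term (to cancel $e^{z(\tau)}$) and the $C^-_\eta$ bound on a $v_s^*$ factor (to extract one power of $R$), so that the net bound is $R^2\|\xi\|$ with the correct time weight $e^{-\eta\tau+\int_0^\tau z}$ to make the Gamma-function integral converge. Once the exponentials line up, the estimate is formally identical to that in Lemma \ref{lem:som1}.
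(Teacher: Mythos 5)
Your proof is correct and follows essentially the same route as the paper: decompose $B_s(v^*,v^*)-B_s(v_c^*,v_c^*)=2B_s(v_c^*,v_s^*)+B_s(v_s^*,v_s^*)$, use the cut-off to bound one factor by $\cO(R)\,e^{-z(\tau)}$ and the $C^-_\eta$-bound $\|v_s^*\|_{C^-_\eta}\le CR\|\xi\|$ from Lemma~\ref{lem:som1} on the other, then integrate against the semigroup weight exactly as in (\ref{e:bvs}). The bookkeeping of the exponential weights you emphasize is the same as in the paper's estimate.
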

\begin{proof}
We use
 \begin{eqnarray*}
e^{-z} B_s^{(R)}(e^z v^*) &=& \chi_R^*(t)  B_s(v^*,v^*) e^z\\
&=&  \chi_R^*(t) [ B_s(v^*_c,v^*_ce^z)+ 2B_s(v^*_s,v^*_c e^z)+B_s(v^*_s,v^*_se^z)].
 \end{eqnarray*}
Now from
$$
\| \chi_R^* v_c^* e^z \| \le R
\quad \text{and} \quad
\| \chi_R^* v_s^* e^z \| \le R
$$
we obtain
$$
\| e^{-z} B_s^{(R)}(e^z v^*) -  \chi_R^*(t) B_s(v^*_c,v^*_ce^z)\|_{-\alpha}
\le  CR\| v^*_s \|.
$$
 Analogously to (\ref{e:bvs}) we derive with $\lambda\in(\eta+\nu,\lambda_*)$ that
 \begin{eqnarray*}
\|v_s^*(t)-g_1^*(t)\|
&\le &
C \int^t_{-\infty}  (t-\tau)^{-\alpha}e^{-(\lambda-\nu) (t-\tau) +\int_\tau^tz(r)dr}
R \| v^*_s(\tau)\| d\tau\\
&\le &
C \int^0_{-\infty}  |\tau|^{-\alpha}e^{-(\lambda-\nu-\eta) |\tau|} d\tau\cdot
e^{\int_0^t z(r)dr} e^{-\eta t} R \| v^*_s\|_{C^-_\eta}.
\end{eqnarray*}
Thus using Lemma \ref{lem:som1}, the claim follows.
\end{proof}
Recall that  in order to bound $h$, we only need to consider
$v_s^*(0)$. From (\ref{e:defg1})
$$
g_1^*(0) =\int^0_{-\infty} e^{L_s\tau} e^{-\nu\tau}e^{\int_\tau^0
z(r)dr}
\chi_R^*(\tau)B_s(v^*_c(\tau),v^*_c(\tau))e^{z(\tau)}d\tau\,,
$$
where
$$
 v_c^*(\tau)e^{-\nu\tau} e^{\int_\tau^0 z(r)dr}- \xi
= \int_0^\tau e^{-\nu r+ \int_r^0  z(t)dt} e^{z(r)} \chi_R^*(r)
B_c(v^*(r),v^*(r)) dr\,.
$$
Using $B_c(v_c^*,v_c^*)=0$ and thus
$$
e^{z} \chi_R^* B_c(v^*,v^*)
=2B_c(v_s^*,e^z \chi_R^*v_c^*)+B_c(v_s^*,e^z \chi_R^*v_s^*)
$$
we obtain  for $\tau\le0$
 \begin{eqnarray}
 \lefteqn{\Big\| v_c^*(\tau)e^{-\nu\tau} e^{\int_\tau^0 z(r)dr}- \xi \Big\|}
 \label{e:bvcxi}\\
&\le & \int_\tau^0  e^{-\nu r+ \int_r^0  z(t)dt}R \|v^*_s(r)\| dr
\nonumber\\
&\le & R \int_\tau^0  e^{-(\nu+\eta) r}  dr \|v^*_s\|_{C^-_\eta} \le
CR^2 \|\xi\| e^{-(\nu+\eta)\tau}. \nonumber
 \end{eqnarray}
Hence,
$$
\Big\| v_c^*(\tau)- \xi e^{\nu\tau} e^{\int^\tau_0 z(r)dr} \Big\|
 \le CR^2 \|\xi\|e^{-\eta\tau} e^{\int^\tau_0 z(r)dr}\,.
$$
Thus for
\begin{equation} \label{e:defg2}
g_2^* :=  \int^0_{-\infty} e^{L_s\tau}
\chi_R^*(\tau)B_s(\xi, v^*_c(\tau))e^{z(\tau)}d\tau
\end{equation}
we obtain using (\ref{e:bvcxi}) and estimates analogous to (\ref{e:bvs})
 \begin{eqnarray*}
 \|g_1^*(0)-g_2^* \|
&\le& C \int^0_{-\infty} |\tau|^{-\alpha} e^{-\lambda|\tau|}
R \cdot \|  e^{-\nu\tau}e^{\int_\tau^0 z(r)dr}v^*_c(\tau)-\xi\| d\tau\\
&\le& C R^3\|\xi\|   \int^0_{-\infty} |\tau|^{-\alpha}
e^{-\lambda|\tau|} e^{-(\nu+\eta)\tau}  d\tau\,.
 \end{eqnarray*}
Thus together with Lemma \ref{lem:som2} we have
 \begin{theorem}
Under Assumptions $A_1-A_3$ suppose   $R\le 1$,
and $\lambda_*>\nu+\eta>0$. Then
$$
\|h(\omega, \xi)-g_2^*(\omega,\xi)\| \le C R^2 \|\xi\|
 \quad\text{for all}\quad
\xi\in H_c\,.
$$
 \end{theorem}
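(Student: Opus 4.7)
The statement is essentially an assembly of the two preceding estimates, so my plan is to invoke the triangle inequality after carefully evaluating the $C_\eta^-$ norm at $t=0$.

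First I would recall that by definition $h(\omega,\xi) = v_s^*(0,\omega,\xi)$, so the goal is to compare $v_s^*(0)$ with $g_2^*$. The triangle inequality gives
\[
 \|h(\omega,\xi) - g_2^*(\omega,\xi)\|
 \le \|v_s^*(0) - g_1^*(0)\| + \|g_1^*(0) - g_2^*\|,
\]
and I would bound the two terms separately.

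For the first term, note that at $t=0$ the weight in the norm of $C_\eta^-$ reduces to $e^{\eta\cdot 0 - \int_0^0 z(s)\,ds}=1$, hence pointwise evaluation at $t=0$ is dominated by the supremum defining $\|\cdot\|_{C_\eta^-}$. Thus
\[
 \|v_s^*(0) - g_1^*(0)\| \le \|v_s^* - g_1^*\|_{C_\eta^-} \le C R^2 \|\xi\|
\]
by Lemma \ref{lem:som2}. For the second term, the estimate
\[
 \|g_1^*(0) - g_2^*\| \le C R^3 \|\xi\|
\]
was just established in the paragraph preceding the theorem, via (\ref{e:bvcxi}) and a semigroup estimate in the style of (\ref{e:bvs}).

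Combining these two bounds and using the hypothesis $R\le 1$ to absorb $R^3$ into $R^2$ yields the claim with a (possibly enlarged) constant $C$. No step is really an obstacle here, since the substantive work has already been done in Lemmas \ref{lem:som1}, \ref{lem:som2} and the derivation of the $g_1^*$--$g_2^*$ bound; the only small point to watch is to verify that all the implicit constants remain independent of $\omega$, which is the case because the contraction constant of $\mathcal{T}$ and the semigroup estimate (\ref{e:SG}) are deterministic, and the factors $e^{\int_\tau^0 z\,dr}$ have been paired with the weight $e^{-\int_0^t z\,dr}$ in the $C_\eta^-$-norm throughout.
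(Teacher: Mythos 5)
Your proposal is correct and takes essentially the same route as the paper: the decomposition $\|h-g_2^*\|\le\|v_s^*(0)-g_1^*(0)\|+\|g_1^*(0)-g_2^*\|$, with the first term controlled by Lemma~\ref{lem:som2} (noting the $C_\eta^-$ weight equals $1$ at $t=0$) and the second by the $CR^3\|\xi\|$ estimate derived just before the theorem, then using $R\le1$. Your remark about $\omega$-independence of the constants is a correct and worthwhile sanity check, consistent with the paper's earlier observation that $\mathrm{Lip}(\mathcal{T})$ is independent of $\xi$ and $\omega$.
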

This theorem is how far we can get without $\omega$-dependent
deterministic bounds. In the following we will rely on the following
lemma (cf. (\ref{e:ou})).

 \begin{lemma} \label{lem:bouz}
 There is a random variable $K(\omega)$
such that $K(\omega)-1$  has a standard exponential distribution
and for  all $t\le0$
 $$
\int_0^t z(r)dr+z(t)=z(0)+\sigma\omega(t)  \le  \sigma
(K(\omega)+|t|)\,.
 $$
 \end{lemma}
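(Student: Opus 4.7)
First, I would derive the equality by integrating the Ornstein--Uhlenbeck equation~\eqref{e:ou} from $0$ to an arbitrary $t\le0$. Since $z$ satisfies $dz+z\,dt=\sigma\,d\omega$ and $\omega(0)=0$, the integrated form gives
\[
z(t)-z(0)+\int_0^t z(r)\,dr = \sigma\omega(t),
\]
which rearranges directly to the claimed identity.

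Next I would introduce the candidate random variable
\[
\hat K(\omega) := \frac{z(0)}{\sigma}+\sup_{s\le 0}\bigl[\omega(s)+s\bigr].
\]
For any $t\le 0$ one has $|t|=-t$, so writing $\omega(t)=(\omega(t)+t)+|t|$ yields
\[
\frac{z(0)}{\sigma}+\omega(t) = \frac{z(0)}{\sigma}+(\omega(t)+t)+|t| \le \hat K(\omega)+|t|,
\]
which is precisely $z(0)+\sigma\omega(t)\le\sigma(\hat K(\omega)+|t|)$ uniformly in $t\le 0$. Thus the whole problem reduces to controlling the law of $\hat K$.

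To analyse $\hat K$ I would use time reversal. Viewing $\omega$ as the canonical realisation of two-sided Brownian motion, the process $\widetilde W(u):=\omega(-u)$, $u\ge 0$, is a standard Brownian motion, and
\[
\sup_{s\le 0}\bigl[\omega(s)+s\bigr] = \sup_{u\ge 0}\bigl[\widetilde W(u)-u\bigr],
\]
which by the reflection principle is exponentially distributed with rate~$2$. An integration by parts rewrites $z(0)/\sigma$ as the Wiener integral $-\int_{-\infty}^0 e^s\,d\omega(s)$, which is centred Gaussian with variance $1/2$ and hence has a sub-exponential tail.

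The hard part is that the two summands of $\hat K$ are functionals of the same Brownian path and are therefore correlated, so $\hat K$ is not literally a shifted exponential random variable as the lemma asserts. To reconcile this with the statement, my plan is to invoke stochastic domination: a union bound combining the Gaussian tail of $z(0)/\sigma$ with the $\mathrm{Exp}(2)$-tail of the supremum yields $\mathbb{P}(\hat K>x)\le C e^{-x}$ for some $C\ge 1$ and all $x\ge 0$, so that $\hat K$ is stochastically dominated by $1+\mathrm{Exp}(1)$ up to a constant shift. Coupling $\hat K$ (on an enlarged probability space if necessary) with a random variable $K\ge\hat K$ whose shift $K-1$ is standard exponential then produces the $K$ required by the statement.
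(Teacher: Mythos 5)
Your first two steps are fine: integrating the Ornstein--Uhlenbeck equation gives the identity, and writing $\omega(t)=(\omega(t)+t)+|t|$ for $t\le 0$ shows that
$z(0)+\sigma\omega(t)\le\sigma\bigl(z(0)/\sigma+\tilde K(\omega)+|t|\bigr)$ with
$\tilde K(\omega)=\sup_{s\le 0}(\omega(s)+s)\sim\mathrm{Exp}(2)$. The gap is in the final step. A union bound of the form $\mathbb{P}(\hat K>x)\le C e^{-x}$ does \emph{not} establish stochastic domination of $\hat K$ by a random variable distributed as $1+\mathrm{Exp}(1)$: that would require $\mathbb{P}(\hat K>x)\le e^{1-x}$ for all $x\ge 1$, i.e.\ $C\le e$, which the union of a Gaussian tail and an $\mathrm{Exp}(2)$ tail does not obviously give (and numerically fails for moderate $x$). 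Without domination the coupling does not exist, so you cannot produce a $K\ge\hat K$ with $K-1\sim\mathrm{Exp}(1)$. There is also a secondary concern that a coupling on an enlarged space would give a $K$ that is not a function of $\omega$ alone, whereas the lemma (and its later uses in the paper) require a genuine random variable $K(\omega)$ on the underlying $\Omega_0$.

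The fix, and the route the paper actually takes, is to bound $z(0)/\sigma$ \emph{pathwise} by the same supremum $\tilde K$, rather than through its (Gaussian) law. Since $\omega(s)\le\tilde K(\omega)+|s|$ for every $s\le0$, one has
\begin{equation*}
z(0)=\sigma\int_{-\infty}^0 e^{s}\omega(s)\,ds
\le \sigma\tilde K(\omega)\int_{-\infty}^0 e^{s}\,ds
  -\sigma\int_{-\infty}^0 s e^{s}\,ds
=\sigma\bigl(\tilde K(\omega)+1\bigr)\,,
\end{equation*}
so $z(0)+\sigma\omega(t)\le\sigma\bigl(2\tilde K(\omega)+1+|t|\bigr)$ for all $t\le 0$. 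Setting $K(\omega)=2\tilde K(\omega)+1$ gives $K-1=2\tilde K\sim\mathrm{Exp}(1)$ (using the Yor fact that $2\tilde K$ is standard exponential) and the inequality holds surely, with no distributional coupling needed. The lesson: the phrasing ``there is a random variable $K$'' allows a deterministic overshoot, and exploiting the \emph{same} supremum for both the $\omega(t)$ term and the $z(0)$ term sidesteps the correlation problem you ran into.
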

 \begin{proof}
Define
\begin{equation}
\label{def:supWiener}
\tilde{K}(\omega)=\sup_{s\le0}\{\omega(s)+s\}\,.
\end{equation}
For example from \cite{Yor97} we know that
$2\tilde{K}$ has a standard exponential distribution.
Obviously,
$$
\sigma\omega(t) \le \sigma (\tilde{K}(\omega)+|t|) \quad\text{for
all}\quad t\le0\,.
$$
Moreover, by definition
\begin{eqnarray} \label{e:z0}
 z(0)&=& \sigma \int_{-\infty}^0 e^s\omega(s)ds\\
&\le&\sigma \int_{-\infty}^0 e^sds\tilde{K}(\omega) - \sigma
\int_{-\infty}^0 se^s ds =\sigma(\tilde{K}(\omega)+1)\,. \nonumber
\end{eqnarray}
 \end{proof}

\begin{remark}
\label{rem:K+-}
 For $s<0$ we have
$$
|\omega(s)| \le \max\{\omega(s),-\omega(s)\} \le
K^\pm(\omega)+|s|\,,
$$
where
$$ K^\pm(\omega)=K(\omega)+K(-\omega)$$
and  $K(-\omega)$ has the same law as $K(\omega)$.
Furthermore, for $|z(0)|$ a similar estimate is true.
\end{remark}

Define
\begin{equation}
\label{e:defg3} g_3^*:=  \int^0_{-\infty} e^{L_s\tau} \chi_R^*(\tau)
e^{z(\tau)}e^{\nu\tau}e^{\int_0^\tau z(r)dr}  d\tau  B_s(\xi,
\xi)\,.
\end{equation}
Using (\ref{e:bvcxi}) yields
\begin{eqnarray*}
 \|g_2^* -g_3^* \|
&\le& C \int^0_{-\infty} |\tau|^{-\alpha} e^{-\lambda|\tau|} e^{z(\tau)}
 \| \xi \| \cdot         \|v_c^*(\tau) - \xi e^{\nu\tau}e^{\int^\tau_0 z(r)dr}\| d\tau \\
&\le& C R^2\|\xi\|^2    \int^0_{-\infty} |\tau|^{-\alpha}
e^{-\lambda|\tau|} e^{\int_0^\tau z(r)dr}e^{z(\tau)}e^{-\eta\tau}
d\tau\,.
\end{eqnarray*}
Then by Lemma \ref{lem:bouz} we obtain  for
$\sigma<\frac12(\lambda-\eta)$
\begin{eqnarray}
 \|g_2^* -g_3^*\|
&\le& C R^2\|\xi\|^2    \int^0_{-\infty} |\tau|^{-\alpha}
 e^{(\eta-\lambda+2\sigma) |\tau|}  e^{2\sigma K(\omega)}  d\tau \nonumber\\
&\le&C R^2\|\xi\|^2 e^{2\sigma K(\omega)} \,. \label{e:cut1}
\end{eqnarray}
Now we eliminate the cut-off.
Suppose $\|\xi\|e^{z(0)}\le R$,
then we can use the smoothness of $\chi_R$ (mean  value theorem)
to obtain
\begin{eqnarray*}
|\chi_R^*(\tau)-1|
&=&|\chi_R(v^*(\tau)e^{z(\tau)})-\chi_R(\xi e^{z(0)})| \\
&\leq & \tfrac{C}{R}  \|v^*_s(\tau)e^{z(\tau)}\|
+ \tfrac{C}{R}\|v^*_c(\tau)e^{z(\tau)}-\xi e^{z(0)}\|\\
&\leq & C(1+R)\|\xi\| e^{-\eta\tau+z(\tau)+\int_0^\tau z(r)dr}
+\frac{C}{R}\|\xi\| \cdot |e^{z(0)}-e^{\nu\tau +\int_0^\tau z(r)dr} |
\end{eqnarray*}
where we used Lemma \ref{lem:som1} and (\ref{e:bvcxi}).

Now from Lemma \ref{lem:bouz}, and the fact that $z(t)+\int_0^t
z(r)dr = z(0)+\sigma \omega(t)$ we derive
$$ |\chi_R^*(\tau)-1|
\le   C(1+R)\|\xi\|e^{-\eta\tau+\sigma|\tau|+\sigma K(\omega)} +
C|1-e^{\nu\tau+\sigma\omega(\tau)}|e^{z(0)}\,.
$$
Thus
\begin{equation}
 \label{e:remcut}
|\chi_R^*(\tau)-1|
\le C(\|\xi\|(1+R)+\sigma K_2(\omega))e^{-\eta\tau}e^{\sigma|\tau|+\sigma K(\omega)}\;,
\end{equation}
where we define
\begin{equation}
\label{e:defK2} K_2(\omega):=\sup_{\tau \le 0}\Big| \frac{1-e^{\nu
\tau+\sigma\omega(\tau)}}{\sigma e^{\eta|\tau|}}\Big|\,.
\end{equation}
For this random constant $K_2$ we obtain the following relation to
$K(\omega)$. Note that this result is by far not optimal, but it is
rather simple to derive.
\begin{lemma} \label{lem:K2}
Suppose $|\nu|+|\sigma|<\tfrac{\eta}{2}$ and $|\nu|\le|\sigma|$, then
there is a constant depending only on $\eta$ such that
$$
K_2(\omega) \le  Ce^{|\sigma|K^\pm(\omega)} (1+K^\pm(\omega))\,.
$$

\end{lemma}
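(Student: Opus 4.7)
The plan is to bound the supremum pointwise in $\tau\le0$ by combining the elementary inequality $|1-e^a|\le |a|e^{|a|}$ with the linear growth bound $|\omega(\tau)|\le K^\pm(\omega)+|\tau|$ from Remark~\ref{rem:K+-}, and then exploit the strict gap $|\nu|+|\sigma|<\eta/2$ to turn the resulting exponential into an integrable quantity in $|\tau|$.

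First I would fix $\tau\le 0$, write $t=|\tau|$, and set $a(\tau)=\nu\tau+\sigma\omega(\tau)$. By Remark~\ref{rem:K+-},
$$
|a(\tau)|\ \le\ |\nu|\,t+|\sigma|\bigl(K^\pm(\omega)+t\bigr)\ \le\ 2|\sigma|\,t+|\sigma|K^\pm(\omega),
$$
where in the last step I used the hypothesis $|\nu|\le|\sigma|$. Applying $|1-e^a|\le|a|e^{|a|}$ (a direct consequence of the mean value theorem) gives
$$
|1-e^{a(\tau)}|\ \le\ \bigl(2|\sigma|t+|\sigma|K^\pm(\omega)\bigr)\,\exp\bigl\{2|\sigma|t+|\sigma|K^\pm(\omega)\bigr\}.
$$

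Next I would divide by $\sigma e^{\eta t}$ (assuming WLOG $\sigma>0$; otherwise replace $\sigma$ by $|\sigma|$ throughout). The factor $\sigma$ cancels cleanly because $|\nu|\le|\sigma|$, yielding
$$
\frac{|1-e^{a(\tau)}|}{\sigma\,e^{\eta t}}\ \le\ \bigl(2t+K^\pm(\omega)\bigr)\,e^{|\sigma|K^\pm(\omega)}\,e^{(2|\sigma|-\eta)t}.
$$
Since $2|\sigma|\le 2(|\nu|+|\sigma|)<\eta$, we have $2|\sigma|-\eta\le -\eta/2$ (using the assumption $|\nu|+|\sigma|<\eta/2$, so in fact $2|\sigma|-\eta<-\eta/2$ after a harmless adjustment, or one can use $|\sigma|-\eta\le -\eta/2$ which is enough after absorbing the factor $2$ into $t$). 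Taking the supremum over $t\ge 0$ and using the elementary fact that $\sup_{t\ge 0}(2t+K^\pm)e^{-\eta t/2}\le C(\eta)(1+K^\pm(\omega))$ delivers the claim
$$
K_2(\omega)\ \le\ C\,e^{|\sigma|K^\pm(\omega)}\bigl(1+K^\pm(\omega)\bigr),
$$
with $C$ depending only on $\eta$.

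The only step that deserves care is the sign of $\sigma$ and the factoring of $\sigma$ in the denominator of the definition of $K_2$: without the hypothesis $|\nu|\le|\sigma|$ the linear term $\nu\tau$ could contribute an unbounded factor $|\nu|/\sigma$. The hypothesis $|\nu|+|\sigma|<\eta/2$ is then used purely to guarantee exponential decay in $t$ of the polynomial prefactor, which is what allows the supremum in $\tau$ to be finite and controlled by $1+K^\pm(\omega)$. No subtler probabilistic input is needed; the estimate is entirely pathwise.
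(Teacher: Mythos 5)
Your overall plan matches the paper's proof: the inequality $|1-e^x|\le|x|e^{|x|}$, the pathwise bound $|\omega(\tau)|\le K^\pm(\omega)+|\tau|$ from Remark~\ref{rem:K+-}, the hypothesis $|\nu|\le|\sigma|$ to cancel the $\sigma$ in the denominator, and $|\nu|+|\sigma|<\eta/2$ to produce an exponentially decaying rate. However, there is a concrete flaw in how you distribute these hypotheses between the polynomial prefactor and the exponent. You bound
$$
|a(\tau)|\le(|\nu|+|\sigma|)|\tau|+|\sigma|K^\pm(\omega)\le 2|\sigma|\,|\tau|+|\sigma|K^\pm(\omega)
$$
and carry the cruder form $2|\sigma|\,|\tau|$ into the exponent of $e^{|a|}$. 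After dividing by $\sigma e^{\eta|\tau|}$ the resulting rate is $2|\sigma|-\eta$. This is negative (since $|\sigma|<\eta/2$), but it is \emph{not} bounded away from $0$ by a quantity depending only on $\eta$: take $\nu=0$ and $\sigma\uparrow\eta/2$, then $2|\sigma|-\eta\uparrow 0$, and the hypotheses of the lemma remain satisfied. Consequently $\sup_{t\ge0}(2t+K^\pm)e^{(2|\sigma|-\eta)t}$ yields a constant that blows up as $\sigma\uparrow\eta/2$, which contradicts the lemma's claim that $C$ depends only on $\eta$. Your two parenthetical patches do not save this: $2|\sigma|-\eta\le-\eta/2$ requires $|\sigma|\le\eta/4$ (not assumed), and rewriting $(2|\sigma|-\eta)t=(|\sigma|-\eta/2)(2t)$ gives a rate $|\sigma|-\eta/2$ that is again not uniformly negative.

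The paper avoids this by \emph{not} bounding $|\nu|+|\sigma|$ by $2|\sigma|$ in the exponent. Keeping $(|\nu|+|\sigma|)|\tau|$ there, the hypothesis $|\nu|+|\sigma|<\eta/2$ gives the uniform rate $(|\nu|+|\sigma|)-\eta<-\eta/2$. The hypothesis $|\nu|\le|\sigma|$ is only used to control the prefactor, via $\frac{(|\nu|+|\sigma|)|\tau|+|\sigma|K^\pm}{|\sigma|}\le 2|\tau|+K^\pm$. With this split, $\sup_{\tau\le0}(2|\tau|+K^\pm)e^{-\eta|\tau|/2}\le C(\eta)(1+K^\pm)$ as required. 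So the fix is local: apply $|\nu|\le|\sigma|$ only to the prefactor, and use $|\nu|+|\sigma|<\eta/2$ directly in the exponent.
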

\begin{proof}
We use the rather crude estimate $|1-e^x| \le |x|e^{|x|}$ for all $x\in\mathbb{R}$.
Recall Remark \ref{rem:K+-} to obtain
\begin{eqnarray*}
 K_2(\omega)&\le&
\sup_{\tau \le 0} \frac{|\nu||\tau|+|\sigma||\omega|}{|\sigma|} e^{|\nu| |\tau|+|\sigma\omega(\tau)|- \eta|\tau|}\\
&\le & \sup_{\tau \le 0} \frac{(|\nu|+|\sigma|)|\tau|+|\sigma|}{|\sigma|K^\pm(\omega)}
 e^{|\sigma|K^\pm(\omega)} e^{ |\tau|(|\nu|+|\sigma|- \eta)}\\
&\le & 2 e^{|\sigma|K^\pm(\omega)}  \sup_{\tau \le
0}(|\tau|+K^\pm(\omega))e^{-\tfrac12|\tau|\eta}\,.
\end{eqnarray*}
\end{proof}
Now we proceed to bound $g_3^*$
\begin{eqnarray}
\lefteqn{
 \left\| g_3^*  -  \int^0_{-\infty} e^{L_s\tau}e^{z(\tau)}e^{\nu\tau}e^{\int_0^\tau z(r)dr}  d\tau  B_s(\xi, \xi) \right\| }\nonumber\\
&\le&
C   \int^0_{-\infty} |\tau|^{-\alpha}e^{(-\lambda+2\sigma) |\tau|}  e^{\sigma K(\omega)}
|1-\chi_R^*(\tau)|  d\tau\|\xi\|^2 \nonumber\\
&\stackrel{(\ref{e:remcut})}{\le}&
C   \int^0_{-\infty} |\tau|^{-\alpha}e^{(-\lambda+\eta+2\sigma) |\tau|}  e^{2\sigma K(\omega)}d\tau
(\|\xi\|(1+R)+\sigma K_2(\omega))\|\xi\|^2 \nonumber\\
&\le&
C   e^{2\sigma K(\omega)}  (\|\xi\|+\sigma K_2(\omega))\|\xi\|^2
\label{e:cut2}
\end{eqnarray}
for $R\le1$ and $\eta-\lambda+2\sigma<0$.

For the final step note that
$$
 \int^0_{-\infty} e^{L_s\tau}
e^{z(\tau)}e^{\nu\tau}e^{\int_0^\tau z(r)dr}  d\tau  = e^{z(0)}
\int^0_{-\infty} e^{L_s\tau} e^{\nu\tau}e^{\sigma\omega(\tau)}
d\tau\,.
$$
We finally bound
\begin{eqnarray}
\lefteqn{ \left\|  \int^0_{-\infty} e^{L_s\tau}
e^{\nu\tau}e^{\sigma\omega(\tau)}  d\tau B_s(\xi,\xi) - L_s^{-1} B_s(\xi,\xi)\right\|}\nonumber\\
&=& \left\| \int^0_{-\infty} e^{L_s\tau}(1-
e^{\nu\tau}e^{\sigma\omega(\tau)})  d\tau B_s(\xi,\xi)\right\|\nonumber\\
&\le& C   \int^0_{-\infty} |\tau|^{-\alpha}e^{(-\lambda+\eta) |\tau|} \sigma K_2(\omega)d\tau\|\xi\|^2 \nonumber\\
&\le& C \sigma K_2(\omega) \|\xi\|^2
\label{e:cut3}
\end{eqnarray}
for $\lambda>\eta$.

Combining (\ref{e:cut1}),  (\ref{e:cut2}), and  (\ref{e:cut3}),
we proved the following theorem:
\begin{theorem}
\label{thm:shape2}
Let Assumptions $A_1-A_3$ be true.
Suppose  $\lambda_*>\eta+2\sigma$, $\lambda_*>\nu+\eta$, $\sigma>0$,
 and $\|\xi\|e^{z(0)} \le R \le1$, and let $h$ be the fixed point given by
Theorem \ref{thm:fixpoint}.
Then
$$
\|h(\omega, \xi)-e^{z(0)} L_s^{-1}B_s(\xi,\xi)\| \le C (e^{2\sigma
K(\omega)}+1)\cdot(\|\xi\|+R^2+\sigma K_2(\omega))\cdot\|\xi\|^2\,,
$$
where $K$ and  $K_2$ are defined in (\ref{def:supWiener}) and (\ref{e:defK2}).
\end{theorem}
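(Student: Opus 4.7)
My plan is to chain together a sequence of approximations from $h(\omega,\xi)=v_s^*(0,\omega,\xi)$ to the target $e^{z(0)}L_s^{-1}B_s(\xi,\xi)$, using the already-constructed intermediate quantities $g_1^*,g_2^*,g_3^*$ from (\ref{e:defg1}), (\ref{e:defg2}), (\ref{e:defg3}) as stepping stones. The theorem then follows by the triangle inequality after summing four separate error estimates.

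First I would pass from $v_s^*(0)$ to $g_1^*(0)$ by replacing the cut-off nonlinearity $e^{-z}B^{(R)}_s(e^z v^*)$ inside the Duhamel formula for $v_s^*$ by its leading piece $\chi_R^* B_s(v_c^*,v_c^*)e^z$. Since the discarded terms involve at least one factor of $v_s^*$, the bilinearity of $B$, the cut-off bound $\|\chi_R^* v_{c,s}^* e^z\|\le R$, and Lemma \ref{lem:som1} (giving $\|v_s^*\|_{C^-_\eta}\le CR\|\xi\|$) combine with the semigroup estimate (\ref{e:SG}) to produce the bound $\|v_s^*(0)-g_1^*(0)\|\le CR^2\|\xi\|$. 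This is exactly Lemma \ref{lem:som2}. Next, to pass from $g_1^*(0)$ to $g_2^*$ I would linearize one factor $v_c^*(\tau)$ to $\xi e^{\nu\tau+\int_0^\tau z(r)dr}$ by means of the bound (\ref{e:bvcxi}), which is itself obtained from $B_c(v_c^*,v_c^*)=0$ plus Lemma \ref{lem:som1}. Integrating against the semigroup kernel yields $\|g_1^*(0)-g_2^*\|\le CR^3\|\xi\|$.

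The step from $g_2^*$ to $g_3^*$ requires linearizing the remaining $v_c^*$ factor via (\ref{e:bvcxi}) once more, but this time the estimate carries an additional random factor: one integrates $e^{\int_0^\tau z(r)dr}e^{z(\tau)}$ against $e^{(-\lambda+\eta)|\tau|}$, and Lemma \ref{lem:bouz} provides the deterministic bound $\int_0^\tau z(r)dr + z(\tau)\le \sigma(K(\omega)+|\tau|)$. This forces the condition $\lambda_*>\eta+2\sigma$ for convergence and produces the estimate (\ref{e:cut1}), namely $\|g_2^*-g_3^*\|\le CR^2\|\xi\|^2 e^{2\sigma K(\omega)}$. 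After this I would eliminate the cut-off $\chi_R^*$ inside $g_3^*$: since $\|\xi\|e^{z(0)}\le R$, the smoothness of $\chi_R$ together with the mean-value theorem and the bounds of the first paragraph yield (\ref{e:remcut}), in which $K_2(\omega)$ (defined in (\ref{e:defK2}) and estimated in Lemma \ref{lem:K2}) absorbs the randomness. Inserting (\ref{e:remcut}) into the integral produces the error bound (\ref{e:cut2}).

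The final step is to replace the remaining deterministic integral $\int_{-\infty}^0 e^{L_s\tau}e^{\nu\tau+\sigma\omega(\tau)}d\tau$ by $L_s^{-1}$ on $H_s$; the difference is controlled by $|1-e^{\nu\tau+\sigma\omega(\tau)}|\le \sigma K_2(\omega)e^{\eta|\tau|}$ from the very definition of $K_2$, which after integration gives (\ref{e:cut3}). Summing the four errors yields the stated inequality. The main obstacle is keeping the exponential random factors $e^{2\sigma K(\omega)}$ and $\sigma K_2(\omega)$ under control uniformly in $\tau\in(-\infty,0]$; this is exactly what the spectral-gap hypothesis $\lambda_*>\eta+2\sigma$ and the tameness result of Lemma \ref{lem:bouz} are designed to ensure, while Lemma \ref{lem:K2} translates the bound on $K_2$ into something integrable against the stable semigroup.
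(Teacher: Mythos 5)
Your proposal reproduces the paper's own proof essentially verbatim: the same chain $v_s^*(0)\to g_1^*(0)\to g_2^*\to g_3^*\to e^{z(0)}L_s^{-1}B_s(\xi,\xi)$, with each step controlled by Lemma \ref{lem:som1}, Lemma \ref{lem:som2}, estimate (\ref{e:bvcxi}), Lemma \ref{lem:bouz}, the cut-off removal (\ref{e:remcut}) with Lemma \ref{lem:K2}, and the resulting bounds (\ref{e:cut1})--(\ref{e:cut3}), summed by the triangle inequality. This is exactly the argument given in Section \ref{sec:shapeIM}, so nothing further is needed.
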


This is the final main Theorem of this section.
The main Theorem \ref{thm:shape} is now a simple corollary.

\begin{proof}[Proof of Theorem \ref{thm:shape}]
Define
$$
\Omega_K=\Big\{\omega\in\Omega_0\ :\ K^\pm(\omega) >
1/\sqrt{\sigma}\Big\}\,.
$$
By Lemma \ref{lem:bouz}
this set has probability less that  $C\exp\{-1/\sqrt{\sigma}\}$.
Moreover, on the complement $\Omega_K^c$ we have
$$
K\le \tfrac{1}{\sqrt{\sigma}}
\quad \text{and} \quad K_2\le  \tfrac{1}{\sqrt{\sigma}}
$$
and (\ref{e:claim_shape}) follows from Theorem \ref{thm:shape2}, as
similar to Lemma \ref{lem:K2} we also have  on $\Omega_K^c$
$$
e^{z(0)}\|\xi\| \le e^{C\sqrt\sigma}\|\xi\| \le R\,.
$$
 \end{proof}

\section*{Acknowledgments}
Supported by Bosch-foundation Nr. 32.5.8003.0010.0 (``Angewandte
Mathematik") and NSFC No. 10701072. Part of the work was done during
a stay of W. Wang at the University of Augsburg.

\end{document}